\newcommand{\TheTitle}{Minimax Separation of the Cauchy Kernel} 
\newcommand{\TheAuthors}{Jonathan E. Moussa}
\headers{\TheTitle}{\TheAuthors}
\title{Minimax Separation of the Cauchy Kernel\thanks{Submitted to the editors September 14, 2019. \vspace{20 pt}
\funding{The Molecular Sciences Software Institute is supported by grant ACI-1547580 from the National Science Foundation.}}}
\author{Jonathan E. Moussa\thanks{Molecular Sciences Software Institute, Blacksburg, VA 24060
    (\email{godotalgorithm@gmail.com}).}}
\begin{document}

\maketitle

\begin{abstract}
We prove and apply an optimal low-rank approximation of the Cauchy kernel over separated real domains.
A skeleton decomposition is the minimum over real-valued functions of the maximum relative pointwise error.
We present an algorithm to optimize its parameters, demonstrate suboptimal but effective heuristic approximations, and identify numerically stable forms.
\end{abstract}

\begin{keywords}
 low-rank approximation, minimax approximation, Cauchy kernel, Cauchy matrix
\end{keywords}

\begin{AMS}
 15A03, 15B05, 32A26, 49K35 \vspace{24 pt}
\end{AMS}

\section{Introduction}

Low-rank approximations of both matrices \cite{lowrank_matrix_survey} and bivariate functions \cite{chebfun2d} are useful primitives in numerical analysis.
For example, they are used in hierarchical matrices \cite{hierarchical_matrix_survey}
 and low-rank approximations of tensors and multivariate functions \cite{lowrank_tensor_survey}.
Truncated singular value decompositions (SVDs) are popular low-rank approximations
 because they are simple to compute and optimal with respect to the 2-norm.
For a matrix $\mathbf{K} \in \mathbb{R}^{m \times n}$
 or an integral kernel $K: \mathcal{X} \times \mathcal{Y} \rightarrow \mathbb{R}$
 between two Lebesgue-integrable function spaces $L^2(\mathcal{X})$ and $L^2(\mathcal{Y})$,
 we can build a minimizer of
 \begin{equation} \label{svd_optimal}
 \min_{\substack{\mathbf{F} \in \mathbb{R}^{r \times m} \\ \mathbf{G} \in \mathbb{R}^{r \times n}}} \left\| \mathbf{K} - \mathbf{F}^T \mathbf{G} \right\|_2 
 \ \ \ \ \mathrm{or} \ \ \ \
  \min_{\substack{ \mathbf{f} \in L^2(\mathcal{X})^r \\ \mathbf{g} \in L^2(\mathcal{Y})^r}} \left\| K(x,y) -  \mathbf{f}(x)^T \mathbf{g}(y) \right\|_2
\end{equation}
 by retaining the $r$ largest singular values and vectors in the SVD of $\mathbf{K}$ or $K(x,y)$.

In this paper, we present a new optimal low-rank approximation result with both conceptual and practical value.
We summarize this result in the following theorem.
\begin{theorem} \label{minimax_theorem}
The maximum relative pointwise error in rank-$r$ approximations of $1/(x - y)$
 minimized over sets of real-valued functions $\mathcal{F}(\mathcal{X})$ and $\mathcal{F}(\mathcal{Y})$
 on compact real domains $\mathcal{X}$ and $\mathcal{Y}$ such that $\max \mathcal{X} > \min \mathcal{X} > \max \mathcal{Y} > \min \mathcal{Y}$ reduces to
\begin{equation} \label{minimax}
 \min_{\substack{\mathbf{f} \in \mathcal{F}(\mathcal{X})^r \\ \mathbf{g} \in \mathcal{F}(\mathcal{Y})^r}} \max_{\substack{x \in \mathcal{X} \\ y \in \mathcal{Y}}} \left| 1 - (x-y) \mathbf{f}(x)^T \mathbf{g}(y) \right|
  = Z_r(\mathcal{X},\mathcal{Y})
\end{equation}
  for the Zolotarev number $Z_r(\mathcal{X},\mathcal{Y})$, which can be defined on such domains as
\begin{equation} \label{simple_minimax}
 Z_r(\mathcal{X},\mathcal{Y}) \coloneqq \min_{\substack{ \tilde{\mathbf{x}} \in [\min \mathcal{X},\max \mathcal{X}]^r \\ \tilde{\mathbf{y}} \in [\min \mathcal{Y},\max \mathcal{Y} ]^r}}
 \max_{\substack{x \in \mathcal{X} \\ y \in \mathcal{Y}}} \left| \prod_{i=1}^r \frac{(x - \tilde{x}_i)(y - \tilde{y}_i)}{(x - \tilde{y}_i)(y - \tilde{x}_i)} \right|.
\end{equation}
Minimizers of \cref{minimax} and \cref{simple_minimax} are related by a skeleton decomposition,
\begin{equation} \label{skeleton}
 \mathbf{f}(x)^T \mathbf{g}(y) = \mathbf{C}(x,\tilde{\mathbf{y}}) \mathbf{C}(\tilde{\mathbf{x}},\tilde{\mathbf{y}})^{-1} \mathbf{C}(\tilde{\mathbf{x}}, y), \ \ \ \ 
 [\mathbf{C}(\mathbf{x},\mathbf{y})]_{i,j} \coloneqq \frac{1}{x_i - y_j}.
\end{equation}
\end{theorem}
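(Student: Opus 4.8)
The plan is to sandwich the minimax value of \cref{minimax} between $Z_r(\mathcal{X},\mathcal{Y})$ from above and from below, and then to read off the structure of the optimizers from the equality case. The upper bound is the explicit construction \cref{skeleton}; the lower bound is the optimality statement, and it is where the work lies.

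\textbf{Upper bound.} I would analyze the skeleton decomposition directly. Fix distinct nodes $\tilde{\mathbf{x}}\in[\min\mathcal{X},\max\mathcal{X}]^r$ and $\tilde{\mathbf{y}}\in[\min\mathcal{Y},\max\mathcal{Y}]^r$ so that $\mathbf{C}(\tilde{\mathbf{x}},\tilde{\mathbf{y}})$ is an invertible Cauchy matrix, and take $\mathbf{f}(x)^T\mathbf{g}(y)$ as in \cref{skeleton}. Since $\mathbf{C}(\tilde{x}_k,\tilde{\mathbf{y}})\mathbf{C}(\tilde{\mathbf{x}},\tilde{\mathbf{y}})^{-1}$ is the $k$th unit row vector, the approximant reproduces $1/(x-y)$ along every line $x=\tilde{x}_k$ and $y=\tilde{y}_k$, so $1-(x-y)\mathbf{f}(x)^T\mathbf{g}(y)$ vanishes there. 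For fixed $y$ this error is, as a function of $x$, a rational function of type $(r,r)$ whose only poles are the $\tilde{y}_j$ and which is finite at infinity; clearing $\prod_j(x-\tilde{y}_j)$ leaves a polynomial of degree at most $r$ carrying the $r$ zeros $\tilde{x}_i$, so the error equals $c(y)\prod_i(x-\tilde{x}_i)/\prod_i(x-\tilde{y}_i)$, and repeating the argument in $y$ (or matching $x\to\infty$) forces $c(y)=\prod_i(y-\tilde{y}_i)/\prod_i(y-\tilde{x}_i)$. Thus the product in \cref{simple_minimax} is the exact relative error of the skeleton, and minimizing its maximum over $\mathcal{X}\times\mathcal{Y}$ with respect to the nodes — compactness furnishing the minimizer — shows the left side of \cref{minimax} is at most $Z_r(\mathcal{X},\mathcal{Y})$ and that the minimizing nodes produce, through \cref{skeleton}, a real-valued minimizer of \cref{minimax}.

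\textbf{Lower bound.} Given any rank-$r$ approximant $h(x,y)=\mathbf{f}(x)^T\mathbf{g}(y)$ of maximal relative error $\varepsilon$, sample $r+1$ points in each domain. Because $h$ has rank at most $r$, the matrix $[h(x_i,y_j)]_{i,j=0}^{r}$ is singular; writing it as the Hadamard product $\mathbf{C}(\mathbf{x},\mathbf{y})\circ(\mathbf{J}-\mathbf{E})$ with $\mathbf{J}$ the all-ones matrix, $|E_{ij}|\le\varepsilon$, and $\mathbf{C}(\mathbf{x},\mathbf{y})$ the nonsingular Cauchy matrix on the samples (whose determinant has a definite sign), the vanishing of $\det(\mathbf{C}(\mathbf{x},\mathbf{y})\circ(\mathbf{J}-\mathbf{E}))$ combined with its affine dependence on each $E_{ij}$ forces $\varepsilon$ past the threshold at which the sign pattern of $\mathbf{E}$ that most reduces the determinant drives it through zero. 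Equivalently, converting a null vector through the explicit Cauchy inverse (Lagrange interpolation at the samples) yields a rational function $R$ of type $(r,r)$ with poles in $[\min\mathcal{Y},\max\mathcal{Y}]$ and zeros in $[\min\mathcal{X},\max\mathcal{X}]$ whose ratio $|R(x)/R(y)|$ is bounded in terms of $\varepsilon$. Taking the samples at the Zolotarev abscissae — for $r=1$ just the four interval endpoints, where the threshold is the classical cross-ratio expression for $Z_1$ — and letting the samples refine, while using compactness to extract a limiting $R$ and the classical reflection and monotonicity arguments to keep its zeros and poles real and inside the two convex hulls, produces an admissible $R$ with $\max_{\mathcal{X}\times\mathcal{Y}}|R(x)/R(y)|\le\varepsilon$, hence $Z_r(\mathcal{X},\mathcal{Y})\le\varepsilon$ by \cref{simple_minimax}. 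Nothing in this argument uses realness of $h$, so replacing $\mathcal{F}(\mathcal{X}),\mathcal{F}(\mathcal{Y})$ by complex-valued functions does not lower the minimum.

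\textbf{Main obstacle and conclusion.} I expect the hard part to be the lower bound for general $r$: identifying the extremal sampling configuration and the worst sign pattern, and verifying that the threshold is exactly $Z_r(\mathcal{X},\mathcal{Y})$ — the variables $x$ and $y$ are coupled by the Cauchy determinant, and any estimate that handles them one at a time loses the sharp constant — together with the reality and convex-hull containment of the extremal rational function on general compact domains, which is routine only for two real intervals. Finally, because equality in the lower bound forces the error of an optimal $h$ to coincide with the essentially unique extremal product $\prod_i(x-\tilde{x}_i)(y-\tilde{y}_i)/[(x-\tilde{y}_i)(y-\tilde{x}_i)]$, such an $h$ must agree on $\mathcal{X}\times\mathcal{Y}$ with the skeleton \cref{skeleton} built from the corresponding optimal nodes, which is the last assertion of \cref{minimax_theorem}.
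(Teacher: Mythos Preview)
Your upper bound is correct and essentially the same as the paper's: the skeleton error is exactly the separable product in \cref{simple_minimax}, so minimizing over nodes gives $Z_r(\mathcal{X},\mathcal{Y})$ as an upper bound and identifies the skeleton as an optimizer.

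The lower bound, however, has a genuine gap that you yourself flag. From the singularity of the $(r{+}1)\times(r{+}1)$ sample matrix $\mathbf{C}(\mathbf{x},\mathbf{y})\circ(\mathbf{J}-\mathbf{E})$ you assert that a null vector, pushed through the Cauchy inverse, produces a rational $R\in\mathcal{R}_{r,r}$ with zeros in $[\min\mathcal{X},\max\mathcal{X}]$, poles in $[\min\mathcal{Y},\max\mathcal{Y}]$, and $\max_{\mathcal{X}\times\mathcal{Y}}|R(x)/R(y)|\le\varepsilon$. None of those three properties is established: the null vector gives you coefficients, not a root/pole location guarantee, and the bound on $|R(x)/R(y)|$ in terms of $\varepsilon$ is never derived---the determinant is multilinear in the $E_{ij}$, so the threshold where it can vanish is a complicated function of the sample points, not obviously $Z_r$. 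Choosing the samples to be ``the Zolotarev abscissae'' presupposes the answer and in any case only constrains the error at those points, not over all of $\mathcal{X}\times\mathcal{Y}$. The paper closes this gap by a different route: it applies the max--min inequality to pull the minimization over $\mathbf{f}(x)$ inside the sup over $(r{+}1)$-point subsets of $\mathcal{Y}$, solves the resulting finite linear program \emph{explicitly} via Cramer's rule to get a closed-form ratio of cofactor sums, then uses a $\delta$-perturbation and a barycentric change of variables to decouple the inner problem from $\mathbf{g}$ and collapse the outer minimization to one over $\mathcal{R}_{r,r}$, landing on \cref{Z_def}. The confinement of roots and poles to the two intervals is not assumed but is supplied by \cref{Z_lemma}, whose equioscillation argument is where realness and separation of $\mathcal{X},\mathcal{Y}$ are actually used---so your remark that ``nothing in this argument uses realness of $h$'' is misleading: the sharp lower bound genuinely fails in the complex case.

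Finally, your concluding uniqueness claim is too strong. The paper notes (start of \S\ref{many_solutions}) that minimizers of \cref{minimax} are \emph{not} unique: the minimized maximum is attained only at $(r{+}1)^2$ points, and $\mathbf{f},\mathbf{g}$ can be altered elsewhere without changing it. What is true is that the skeleton \cref{skeleton} built from the (essentially unique) optimal nodes is \emph{one} minimizer, not that every minimizer agrees with it on $\mathcal{X}\times\mathcal{Y}$.
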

\cref{minimax_theorem} is an example of an integral kernel and error metric for which a skeleton decomposition is optimal rather than a truncated SVD.
Extensions of this result to other kernels or error metrics are likely to be limited because of the specificity of its proof.
However, a skeleton decomposition may remain superior to a truncated SVD in similar circumstances,
 thus this specific exact result may beget a more diverse set of useful approximations.
Also, the approximation power of skeleton decompositions and truncated SVDs are related,
 thus an improved understanding of one can benefit the other.
For example, the Zolotarev number in \cref{minimax} is also a part of upper bounds
 on the minimum 2-norm error attainable by truncated SVDs of matrices with a low displacement rank \cite{displacement.rank.review},
 and singular values are part of upper bounds on the minimum 2-norm error attainable by skeleton decompositions of general matrices \cite{skeleton}.

The paper proceeds as follows.
In \cref{proof}, we prove \cref{minimax_theorem}.
In \cref{many_solutions}, we review the known analytical solutions to \cref{simple_minimax} when $\mathcal{X}$ and $\mathcal{Y}$ are closed intervals,
 compute numerical solutions when $\mathcal{X}$ and $\mathcal{Y}$ are finite unions of closed intervals,
 and construct heuristic solutions when $\mathcal{X}$ and $\mathcal{Y}$ have finite cardinality.
In \cref{svd_comparison}, we compare \cref{skeleton} with the truncated SVD to motivate several numerically stable forms for \cref{skeleton} and 
 analyze their ability to approximate each other based on the equivalence between the norms that they minimize.
In \cref{conclusions}, we conclude with a summary of possible future extensions and applications of \cref{minimax_theorem}.

\section{Main proof\label{proof}}

Our strategy for proving \cref{minimax_theorem} is to show that $Z_r(\mathcal{X}, \mathcal{Y})$ is both an upper and a lower bound in \cref{minimax}.
This upper bound has been proved for both the Cauchy kernel \cite[p. 332]{displacement.rank.review} and the closely related Hilbert kernel \cite[p. 429]{Hilbert_upper_bound}
 by relating their low-rank approximations to separable relative error functions,
\begin{equation} \label{residual_reduction}
 \forall h \in \mathcal{R}_{r,r}, \ \ \ \ \exists \mathbf{f}, \mathbf{g} \in \mathcal{R}_{r,r}^r \ \ \ \ \mathrm{s. t.} \ \ \ \ \frac{1}{x - y} \left( 1 - \frac{h(x)}{h(y)} \right) = \mathbf{f}(x)^T \mathbf{g}(y),
\end{equation}
 for sets of rational functions, $\mathcal{R}_{m,n} \coloneqq \{ p/q : p \in \mathcal{P}_m, q \in \mathcal{P}_n \}$, where $\mathcal{P}_n$ is the set of polynomials of degree at most $n$.
The upper bound holds when $\mathcal{X}$ and $\mathcal{Y}$ are closed disjoint subsets of the extended complex plane,
 while the lower bound requires that they be real, compact, and separated.
To simplify the presentation of the proof, we start with a Lemma to reconcile our nonstandard definition of $Z_r(\mathcal{X},\mathcal{Y})$ in \cref{simple_minimax}
 and prepare for the construction of numerical solutions of $Z_r(\mathcal{X},\mathcal{Y})$ in \cref{many_solutions}.

\begin{lemma} \label{Z_lemma}
The standard definition of $(n,n)$th Zolotarev numbers \cite[eq. (1.1)]{zolotarev_number_def},
\begin{equation} \label{Z_def}
 Z_{n}(\mathcal{X},\mathcal{Y}) \coloneqq \inf_{h \in \mathcal{R}_{n,n}} \frac{\sup_{x \in \mathcal{X}} \left| h(x) \right|}{\inf_{y \in \mathcal{Y}}|h(y)|}
\end{equation}
where $\mathcal{X}$ and $\mathcal{Y}$ are closed disjoint subsets of the extended complex plane,
 is equivalent to \cref{simple_minimax} when $\mathcal{X}$ and $\mathcal{Y}$ are real and compact and $\max \mathcal{X} > \min \mathcal{X} > \max \mathcal{Y} > \min \mathcal{Y}$.
If $\min\{|\mathcal{X}|,|\mathcal{Y}|\} > n$ also, then it is strictly monotonic, $Z_{n+1}(\mathcal{X},\mathcal{Y}) < Z_{n}(\mathcal{X},\mathcal{Y})$, and
 has a unique minimizer up to a nonzero multiplicative constant characterized by
\begin{equation} \label{Z_equioscillation}
 \frac{h(x_i)}{h(y_j)} = (-1)^{i+j} \frac{\max_{x \in \mathcal{X}} \left| h(x) \right|}{\min_{y \in \mathcal{Y}}|h(y)|}, \ \ \ x_i < x_{i+1}, \ \ \ y_j > y_{j+1}, \ \ \ 1 \le i,j \le n+1,
\end{equation}
 for $\mathbf{x} \in \mathcal{X}^{n+1}$ and $\mathbf{y} \in \mathcal{Y}^{n+1}$ that respectively interleave minimizers $\tilde{\mathbf{x}}$ and $\tilde{\mathbf{y}}$ of \cref{simple_minimax}.
\end{lemma}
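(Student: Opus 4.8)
The plan is to route everything through an equioscillation characterization of the extremal rational function for \cref{Z_def}, adapting the classical theory of rational Chebyshev approximation to the scale-invariant ratio functional $h \mapsto \sup_{x\in\mathcal{X}}|h(x)| / \inf_{y\in\mathcal{Y}}|h(y)|$ on $\mathcal{R}_{n,n}$. Write $I_{\mathcal{X}} \coloneqq [\min\mathcal{X},\max\mathcal{X}]$ and $I_{\mathcal{Y}} \coloneqq [\min\mathcal{Y},\max\mathcal{Y}]$; the hypothesis makes these disjoint with $I_{\mathcal{Y}}$ entirely left of $I_{\mathcal{X}}$. That \cref{simple_minimax} is at least \cref{Z_def} is immediate, since $h(t) = \prod_i(t-\tilde{x}_i)/(t-\tilde{y}_i)$ lies in $\mathcal{R}_{n,n}$ and the maximand in \cref{simple_minimax} equals $\sup_{\mathcal{X}}|h|/\inf_{\mathcal{Y}}|h|$, making \cref{simple_minimax} an infimum of the functional of \cref{Z_def} over a subset of its admissible set; all the remaining content is the reverse inequality together with the monotonicity, uniqueness, and alternation refinements.

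First I would show the infimum in \cref{Z_def} is attained by some $h^\ast$ of exact type $(n,n)$: a minimizing sequence normalized so $\inf_{\mathcal{Y}}|h| = 1$ has numerators and denominators of degree at most $n$, its zeros and poles subconverge in $\overline{\mathbb{C}}$ (the ratio bound keeping poles away from $\mathcal{X}$), and a minimizer cannot have type below $(n,n)$ because multiplying by the rank-one factor $(t-\tilde{x})/(t-\tilde{y})$ of \cref{simple_minimax} would strictly lower the ratio (by the $Z_1 < 1$ estimate below). The crux is that $h^\ast$ must equioscillate: there exist $x_1 < \cdots < x_{n+1}$ in $\mathcal{X}$ and $y_1 > \cdots > y_{n+1}$ in $\mathcal{Y}$ with $h^\ast(x_i) = (-1)^i\sigma\sup_{\mathcal{X}}|h^\ast|$ and $h^\ast(y_j) = (-1)^j\sigma$ for a single fixed sign $\sigma$. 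This follows in de la Vall\'ee Poussin style by perturbing the $2n+1$ real parameters of $h^\ast$ (its leading constant and its zeros and poles, complex ones in conjugate pairs): the first-order change in $\log|h^\ast|$ ranges over the span of $1$ and the real simple fractions with poles at the zeros and poles of $h^\ast$, a Chebyshev system on the complement of those points; were $|h^\ast|$ not maximal on $\mathcal{X}$ at $n+1$ points and minimal on $\mathcal{Y}$ at $n+1$ points in the alternating configuration above, that span would contain a function negative at every $\mathcal{X}$-maximizer and positive at every $\mathcal{Y}$-minimizer of $|h^\ast|$, and the corresponding perturbation would strictly decrease the ratio, contradicting minimality. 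The hypothesis $\min\{|\mathcal{X}|,|\mathcal{Y}|\} > n$ enters exactly here, supplying the $n+1$ distinct alternation points and excluding the degenerate case in which $h^\ast$ vanishes on all of $\mathcal{X}$ and $Z_n = 0$.

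The structural claims then follow by counting. Since $h^\ast$ is continuous on $\mathbb{R}$ off $I_{\mathcal{Y}}$, its strict sign change between consecutive $x_i$ forces in each of the $n$ gaps an odd-order zero of its numerator inside $I_{\mathcal{X}}$; the numerator has degree at most $n$, so there are exactly $n$ zeros, one per gap, all simple, real, and in $I_{\mathcal{X}}$, interleaving the $x_i$. On $\mathcal{Y}$ the sign changes of $h^\ast$ occur with no zero available between consecutive $y_j$, so each gap contains an odd-order pole; hence exactly $n$ poles, all simple, real, and in $I_{\mathcal{Y}}$, interleaving the $y_j$. Thus $h^\ast = c\prod_i(t-\tilde{x}_i)/(t-\tilde{y}_i)$ with $\tilde{\mathbf{x}} \in I_{\mathcal{X}}^n$ and $\tilde{\mathbf{y}} \in I_{\mathcal{Y}}^n$ is admissible in \cref{simple_minimax}, which is the reverse inequality and so establishes the equivalence; since $\inf_{\mathcal{Y}}|h^\ast| = 1$, the alternation is exactly \cref{Z_equioscillation} with the stated interleaving of $\mathbf{x},\mathbf{y}$ and $\tilde{\mathbf{x}},\tilde{\mathbf{y}}$.

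For uniqueness up to a nonzero constant, normalize two minimizers $h_1,h_2$ by $\inf_{\mathcal{Y}} = 1$; then $h_1 - h_2$ has a numerator of degree at most $2n$, while the alternation of $h_1$, combined with $|h_2| \le Z_n$ on $\mathcal{X}$ and $|h_2| \ge 1$ on $\mathcal{Y}$ (and a flip of the sign of $h_2$ to match the two sign patterns), produces at least $n$ zeros of $h_1-h_2$ in $I_{\mathcal{X}}$ and at least $n$ in $I_{\mathcal{Y}}$, and accounting for the multiplicities at common contact points — where both functions attain an extreme value and hence have vanishing derivative on the relevant interval — supplies one more, which is impossible unless $h_1 \equiv h_2$. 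For strict monotonicity it is cleanest to note $Z_1(\mathcal{X},\mathcal{Y}) < 1$: already $\tilde{x} = \min\mathcal{X}$, $\tilde{y} = \max\mathcal{Y}$ gives a value at most $\frac{(\max\mathcal{X}-\min\mathcal{X})(\max\mathcal{Y}-\min\mathcal{Y})}{(\max\mathcal{X}-\max\mathcal{Y})(\min\mathcal{X}-\min\mathcal{Y})}$, whose being below $1$ is equivalent to the hypothesis $\max\mathcal{Y} < \min\mathcal{X}$, so concatenating optimal parameter vectors of \cref{simple_minimax} for $Z_n$ and $Z_1$ yields $Z_{n+1} \le Z_1 Z_n < Z_n$ (and $\min\{|\mathcal{X}|,|\mathcal{Y}|\} = n+1$ instead gives $Z_{n+1} = 0$ outright). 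I expect the genuine obstacle to be the equioscillation step and the tightness of the uniqueness count: $\mathcal{R}_{n,n}$ is not convex, so the perturbation must be built so that its denominator stays nonvanishing on $\mathcal{X}\cup\mathcal{Y}$ and the type is not exceeded, and since the $2n$ forced zeros exactly match a degree-$2n$ numerator the argument must use the interleaving of the two functions' zeros and poles and the double contacts, rather than treating every alternation point as a simple zero of $h_1 - h_2$.
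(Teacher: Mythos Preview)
Your route is valid but genuinely different from the paper's. The paper does not run a single de~la~Vall\'ee~Poussin perturbation over all $2n{+}1$ parameters; instead it freezes $q$ and optimizes $p$ (and vice versa), reducing each side to a \emph{weighted polynomial} Chebyshev problem on $\mathcal{X}$ (resp.\ $\mathcal{Y}$), from which the $n{+}1$ alternations and the confinement of the $n$ simple zeros of $p$ to $I_{\mathcal{X}}$ (resp.\ poles to $I_{\mathcal{Y}}$) come out of standard equioscillation for polynomials. For sufficiency and uniqueness the paper does not count zeros of $h_1-h_2$; it uses the nonlinear change of variable $k=(1+Z_n)\,(1-h^2)/(1+h^2)$ to recast the Zolotarev problem as rational Chebyshev approximation of a step function on $[\min\mathcal{Y},\max\mathcal{X}]$ with a piecewise weight, and then invokes the Haar-subspace uniqueness theorem for that problem; the $4n{+}2$ alternations of $k$ correspond exactly to the $2(n{+}1)$ extrema plus $2n$ zeros/poles of $h$. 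For strict monotonicity the paper argues by contradiction from uniqueness (the extremal functions for $Z_n$ and $Z_{n+1}$ have different numbers of zeros), rather than your submultiplicative $Z_{n+1}\le Z_1 Z_n<Z_n$.

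What each buys: your approach is more self-contained and your monotonicity argument is quantitatively sharper, but the uniqueness step is where it is thinnest. On $I_{\mathcal{Y}}$ the difference $h_1-h_2$ carries the $2n$ poles of $q_1 q_2$, so the sign pattern of the numerator $p_1q_2-p_2q_1$ at the $y_j$ is filtered through the signs of both $q_1(y_j)$ and $q_2(y_j)$; since the poles of $h_2$ interleave the alternation points of $h_2$, not those of $h_1$, you do not get a clean alternating sign for $q_2(y_j^{(1)})$ for free, and the ``one extra zero from double contact'' requires the extremum to be interior to an interval of $\mathcal{X}$ or $\mathcal{Y}$, which need not hold for merely compact sets. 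These are repairable (e.g., by working with $h_1/h_2$ instead, or by combining the alternation sets of both functions), but they are exactly the details the paper's $h\mapsto k$ transformation is designed to avoid by landing in a problem where uniqueness is already a theorem.
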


\begin{proof}
For $\min\{|\mathcal{X}|,|\mathcal{Y}|\} \le n$, we can cover $\mathcal{X}$ or $\mathcal{Y}$ with the roots or poles of $h$ respectively to attain a trivial minimum, $Z_n(\mathcal{X},\mathcal{Y}) = 0$.
Therefore we only consider the relevant nontrivial case of $\min\{|\mathcal{X}|,|\mathcal{Y}|\} > n$ with a real compact $\mathcal{X}$ and $\mathcal{Y}$ such that
 $\max \mathcal{X} > \min \mathcal{X} > \max \mathcal{Y} > \min \mathcal{Y}$.
 Here, $Z_n(\mathcal{X},\mathcal{Y}) \neq 0$ because a nonzero $h(z)$ must have finite nonzero values at any $z \in \mathcal{X} \cup \mathcal{Y}$
  that does not correspond to a root or pole, and there are too few roots and poles to cover $\mathcal{X}$ or $\mathcal{Y}$ for any $h \in \mathcal{R}_{n,n}$.

First, we establish that \cref{Z_equioscillation} is necessary for minimizers of \cref{Z_def}.
Without loss of generality, we use polynomials $p$ and $q$ as minimization variables such that $h = p/q$
 and restrict their roots such that the outer minimand is well-defined and attained,
\begin{equation} \label{poly_representation}
 Z_n(\mathcal{X}, \mathcal{Y}) = \inf_{\substack{p \in \mathcal{P}_n^{\mathcal{Y}} \\ q \in \mathcal{P}_n^{\mathcal{X}}}} \max_{\substack{x \in \mathcal{X} \\ y \in \mathcal{Y}}} \left| \frac{p(x) q(y)}{q(x) p(y)} \right|,
  \ \ \ \ \mathcal{P}^{\mathcal{Z}}_n \coloneqq \{ f \in \mathcal{P}_n : f(z) \neq 0, \forall z \in \mathcal{Z} \}.
\end{equation}
For a given $q \in \mathcal{P}_n^{\mathcal{X}}$ and $y \in \mathcal{Y}$, we study the set of minimizing $p$.
The same analysis applies to the set of minimizing $q$ for a given $p \in \mathcal{P}_n^{\mathcal{Y}}$ and $x \in \mathcal{X}$.
We replace $p$ with $\hat{p} = p/p(y)$ as the minimization variable to isolate $y$ as a domain constraint,
\begin{equation}
  \min_{p \in \mathcal{P}_n^{\mathcal{Y}}} \max_{x \in \mathcal{X}} \left| \frac{p(x) q(y)}{q(x) p(y)} \right| =
  |q(y)| \min_{\substack{\hat{p} \in \mathcal{P}_n^{\mathcal{Y}} \\ \hat{p}(y) = 1}} \max_{x \in \mathcal{X}} \left| \frac{\hat{p}(x)}{q(x)} \right| . \notag
\end{equation}
If we represent $\hat{p}$ as $\hat{p}(x) = c (x^n - b(x))$ for $c \in \mathbb{C}$ and $b \in \mathcal{P}_{n-1}$, then we can relate the set of minimizers
 to a weighted polynomial approximation problem \cite[Chap.\@ 3]{approximation.theory},
\begin{equation}
 \underset{\substack{\hat{p} \in \mathcal{P}_n^{\mathcal{Y}} \\ \hat{p}(y) = 1}}{\mathrm{arg \, min}} \max_{x \in \mathcal{X}} \left| \frac{\hat{p}(x)}{q(x)} \right| =
  \left\{ \frac{x^n - \tilde{b}(x)}{y^n - \tilde{b}(y)} : \tilde{b} \in \underset{b \in \mathcal{P}_{n-1}}{\mathrm{arg \, min}} \max_{x \in \mathcal{X}} \left| \frac{x^n - b(x)}{q(x)} \right| \right\} . \notag  
\end{equation}
Such $\tilde{b}$ are unique and attain the maximum at $n+1$ ordered points in $\mathcal{X}$ where the sign of $(x^n - b(x))/|q(x)|$ alternates.
The unique minimizing $\hat{p}$ is indeed in $\mathcal{P}_n^{\mathcal{Y}}$ since it has $n$ simple roots in $[\min \mathcal{X}, \max \mathcal{X}]$.
The corresponding set of minimizing $p$,
\begin{equation}
 \underset{p \in \mathcal{P}_n^{\mathcal{Y}}}{\mathrm{arg \, min}} \max_{x \in \mathcal{X}} \left| \frac{p(x) q(y)}{q(x) p(y)} \right| =
  \left\{ c(x^n - \tilde{b}(x)) : c \in \mathbb{C} \setminus \{0\}, \tilde{b} \in \underset{b \in \mathcal{P}_{n-1}}{\mathrm{arg \, min}} \max_{x \in \mathcal{X}} \left| \frac{x^n - b(x)}{q(x)} \right| \right\}, \notag
\end{equation}
 does not depend on $y$ and thus is also the set of minimizers for \cref{poly_representation} at fixed $q \in \mathcal{P}_n^{\mathcal{X}}$.
Therefore it is necessary for $p$ and $q$ minimizers to satisfy equioscillation constraints,
\begin{align} \label{polynomial_necessary}
 \frac{p(x_i)}{|q(x_i)|} &= s (-1)^i \max_{x \in \mathcal{X}} \left|\frac{p(x)}{q(x)}\right| , \ \ \ \ \ \, x_i < x_{i+1}, \ \ \ \ 1 \le i \le n, \ \ \ \ \mathrm{and} \\
 \frac{q(y_j)}{|p(y_j)|} &= s' (-1)^j \max_{y \in \mathcal{Y}} \left| \frac{q(y)}{p(y)} \right| , \ \ \ \ \, y_j > y_{j+1}, \ \ \ \ 1 \le j \le n, \notag
\end{align}
 for some $s, s' \in \{-1,1\}$, $\mathbf{x} \in \mathcal{X}^{n+1}$, and $\mathbf{y} \in \mathcal{Y}^{n+1}$.
With their constrained roots, we can remove the modulus from $|q(x_i)|$ and $|p(y_j)|$ in \cref{polynomial_necessary} and set $s = s'$.
Necessity of \cref{Z_equioscillation} follows from substituting $h = p/q$ into \cref{polynomial_necessary} and multiplying constraints.
 
Next, we establish that minimizers of \cref{simple_minimax} and \cref{Z_def} exist and are equivalent to each other.
Since we have already established that minimizers of \cref{Z_def} must have $n$ simple roots in $[\min \mathcal{X}, \max \mathcal{X}]$
 and $n$ simple poles in $[\min \mathcal{Y}, \max \mathcal{Y}]$ when $\mathcal{X}$ and $\mathcal{Y}$ are real, compact, and separated,
 we can restrict the minimization domain to rational functions that satisfy these constraints without excluding minimizers.
We transform from \cref{Z_def} to \cref{simple_minimax} by representing $h$ as a product of roots $\tilde{\mathbf{x}}$ and poles $\tilde{\mathbf{y}}$ on restricted domains.
Because the optimand of \cref{simple_minimax} is continuous and bounded over its compact domains,
 minimizers exist by Berge's maximum theorem \cite[Chap.\@ 6, sect. 3]{maximum_theorem}.

After that, we establish the sufficiency of \cref{Z_equioscillation} for minimizers of \cref{Z_def} and their uniqueness up to a nonzero multiplicative constant.
Our proof is directly inspired by established mappings between minimizers of \cref{Z_def} and optimal rational approximants \cite[Thm. 2.1]{zolotarev34}
 and between minimizers of \cref{Z_def} for which $n$ differs by a factor of two \cite[sect. 2]{Zolotarev_derivation}.
For a given minimizer $h \in \mathcal{R}_{n,n}$ of \cref{Z_def}, we rescale it to satisfy
\begin{equation} \label{h_normalization}
 \sup_{x \in \mathcal{X}} |h(x)| = \sqrt{Z_n(\mathcal{X},\mathcal{Y})}, \ \ \ \  \inf_{y \in \mathcal{Y}} |h(y)| = 1 / \sqrt{Z_n(\mathcal{X},\mathcal{Y})},
\end{equation}
 and be real-valued on $\mathbb{R}$.
We consider an invertible map between $h^2$ and $k$ in $\mathcal{R}_{2n,2n}$, 
\begin{equation} \label{rational_approximant}
 k(z) = (1+Z_n(\mathcal{X},\mathcal{Y})) \frac{1 - h(z)^2}{1 + h(z)^2}, \ \ \ \ h(z)^2 = \frac{1+Z_n(\mathcal{X},\mathcal{Y}) - k(z)}{1+Z_n(\mathcal{X},\mathcal{Y}) + k(z)},
\end{equation}
 where $k$ is a prospective minimizer of a weighted rational approximation problem,
\begin{align} \label{rational_approximation_problem}
 & \ \ \ \min_{k \in \mathcal{R}_{2n,2n}} \max_{z \in [\min \mathcal{Y},\max \mathcal{X}]} \omega(z) \left| \theta(z) - k(z) \right| = Z_n(\mathcal{X},\mathcal{Y}), \\
  \theta(z) & \coloneqq \left\{ \begin{array}{ll} 1, & z \ge \min \mathcal{X} \\ -1 , & z < \min \mathcal{X} \end{array} \right. , \ \ \ \
  \omega(z) \coloneqq \left\{ \begin{array}{ll} 1, & z \in \mathcal{X} \cup \mathcal{Y} \\ Z_n(\mathcal{X},\mathcal{Y}) / 3, & z \not\in \mathcal{X} \cup \mathcal{Y} \end{array} \right. \notag .
\end{align}
In generalized rational approximation theory \cite[Chap.\@ 5]{approximation.theory},
 rational functions over an interval domain with a positive weight function have Haar-subspace structure,
 which guarantees a unique minimizing $k$ characterized by attaining the maximum error at $m$ ordered points in $[\min \mathcal{Y},\max \mathcal{X}]$
 where the sign of $\omega(z) ( \theta(z) - k(z) )$ alternates.
If $k \not\in \mathcal{R}_{2n,2n-1} \cup \mathcal{R}_{2n-1,2n}$, then $m = 4 n + 2$.
Since \cref{rational_approximant} connects the range of $h$ and $k$
 from $(-\infty, -1/\sqrt{Z_n(\mathcal{X},\mathcal{Y})}] \cup [ 1/\sqrt{Z_n(\mathcal{X},\mathcal{Y})}, \infty)$
 to $[-1 - Z_n(\mathcal{X},\mathcal{Y}), -1 + Z_n(\mathcal{X},\mathcal{Y})]$ and from $[-\sqrt{Z_n(\mathcal{X},\mathcal{Y})}, \sqrt{Z_n(\mathcal{X},\mathcal{Y})}]$ to
 $[1 - Z_n(\mathcal{X},\mathcal{Y}), 1 + Z_n(\mathcal{X},\mathcal{Y})]$,
 the $n$ roots of $h$ and $n + 1$ local minima of $|h(y)|$ for $y \in \mathcal{Y}$
 correspond to the $2n+1$ local minima of $\omega(z) ( \theta(z) - k(z) )$ for $z \in \mathcal{X} \cup \mathcal{Y}$
 and the $n$ poles of $h$ and $n+1$ local maxima of $|h(x)|$ for $x \in \mathcal{X}$ correspond to its $2n+1$ local maxima.
Because the unweighted error is bounded by the triangle inequality and $Z_n(\mathcal{X},\mathcal{Y}) \le 1$,
 \begin{equation}
  \left| \theta(z) - k(z) \right| \le 3, \ \ \ \ z \in \mathbb{R}, \notag
\end{equation}
 the weighted error cannot have a larger maximum in $[\min \mathcal{Y},\max \mathcal{X}] \setminus (\mathcal{X} \cup \mathcal{Y})$.
Thus, a unique minimizing $k$ of \cref{rational_approximation_problem} is mapped by \cref{rational_approximant}
 to a unique minimizing $h$ of \cref{Z_def} up to a nonzero multiplicative constant.
Similarly, the sufficiency of error alternation for a minimizing $k$
 corresponds to the sufficiency of \cref{Z_equioscillation} for a minimizing $h$ because
 \cref{rational_approximant} maps the local extrema of $\omega(z) ( \theta(z) - k(z) )$ for $z \in \mathcal{X} \cup \mathcal{Y}$ to the interleaved
 roots and local maxima of $|h(x)|$ for $x \in \mathcal{X}$ and $1/|h(y)|$ for $y \in \mathcal{Y}$.

Finally, we establish the strict monotonicity of $Z_n(\mathcal{X}, \mathcal{Y})$.
It is monotonic because $\mathcal{R}_{n,n} \subset \mathcal{R}_{n+1,n+1}$ and $Z_{n+1}(\mathcal{X},\mathcal{Y})$ is minimized over a larger domain than $Z_n(\mathcal{X}, \mathcal{Y})$.
For $\min\{|\mathcal{X}|,|\mathcal{Y}|\} > n+1$,
  monotonicity is strict because $Z_n(\mathcal{X}, \mathcal{Y})$ and $Z_{n+1}(\mathcal{X},\mathcal{Y})$ have unique minimizers with different numbers of roots and poles,
  and their equality would contradict this uniqueness.
For $\min\{|\mathcal{X}|,|\mathcal{Y}|\} = n+1$, monotonicity is strict because $Z_n(\mathcal{X}, \mathcal{Y}) \neq 0$ and $Z_{n+1}(\mathcal{X}, \mathcal{Y}) = 0$.
\end{proof}

The purpose of \cref{Z_lemma} in the proof of \cref{minimax_theorem} is to simplify its end point.
Instead of proving a direct equivalence between \cref{minimax} and \cref{simple_minimax}, we are only required to prove the equivalence between \cref{minimax} and \cref{Z_def}
 before invoking \cref{Z_lemma}.

\begin{proof}[Proof of \cref{minimax_theorem}]
We focus on the nontrivial case of $\min \{ |\mathcal{X}|, |\mathcal{Y}| \} > r$, since
 $Z_r(\mathcal{X}, \mathcal{Y})=0$ can be achieved by \cref{skeleton} for $\min \{ |\mathcal{X}|, |\mathcal{Y}| \} \le r$ by covering $\mathcal{X}$ or $\mathcal{Y}$
  with elements of $\tilde{\mathbf{x}}$ or $\tilde{\mathbf{y}}$ respectively.
We refer to the left-hand side of \cref{minimax} as $\tilde{Z}_r(\mathcal{X},\mathcal{Y})$ in this proof,
 thus our goal is to show that $\tilde{Z}_r(\mathcal{X},\mathcal{Y}) = Z_r(\mathcal{X},\mathcal{Y})$.
We can readily show that $\tilde{Z}_r(\mathcal{X},\mathcal{Y}) \le Z_r(\mathcal{X},\mathcal{Y})$ by restricting the minimization domain of $\mathbf{f}$ and $\mathbf{g}$ in \cref{minimax}
 to \cref{residual_reduction} and replacing $\mathbf{f}$ and $\mathbf{g}$ in the optimand with $h$.
The root-pole representation of $h$ in \cref{simple_minimax} then corresponds to $\mathbf{f}(x)^T\mathbf{g}(y)$ in \cref{skeleton}.
The rest of the proof is focused on showing that $\tilde{Z}_r(\mathcal{X},\mathcal{Y}) \ge Z_r(\mathcal{X},\mathcal{Y})$ using a sequence of relaxations.

The primary form of relaxation is the max-min inequality,
\begin{equation}
 \inf_{a \in \mathcal{A}} \sup_{b \in \mathcal{B}} f(a,b) \ge \sup_{b \in \mathcal{B}} \inf_{a \in \mathcal{A}} f(a,b), \notag
\end{equation}
 for any function $f: \mathcal{A} \times \mathcal{B} \rightarrow \mathbb{R}$.
We split the maximization over $\mathcal{Y}$ in $\tilde{Z}_r(\mathcal{X},\mathcal{Y})$
 into maximizations over subsets of $\mathcal{Y}$ and their elements
 and use the max-min inequality,
\begin{align} \label{maxmin_relax}
 \tilde{Z}_r(\mathcal{X},\mathcal{Y}) &= \inf_{\substack{\mathbf{f} \in \mathcal{F}(\mathcal{X})^r \\ \mathbf{g} \in \mathcal{F}(\mathcal{Y})^r}} 
  \sup_{\substack{x \in \mathcal{X} \\ \{y_1, \cdots , y_{r+1} \} \subseteq \mathcal{Y}}} \max_{i} \left| 1 - (x-y_i) \mathbf{f}(x)^T \mathbf{g}(y_i) \right| \\
 &\ge
  \inf_{\mathbf{g} \in \mathcal{F}(\mathcal{Y})^r}
  \sup_{\substack{x \in \mathcal{X} \\ \{y_1, \cdots , y_{r+1} \} \subseteq \mathcal{Y}}}
  \min_{\mathbf{f} \in \mathbb{R}^r} \max_{i} \left| 1 - (x-y_i) \mathbf{f}^T \mathbf{g}(y_i) \right| , \notag
\end{align}
 which results in the independent minimization of $\mathbf{f}(x) \in \mathbb{R}^r$ at each $x \in \mathcal{X}$.

The inner minimax problem in \cref{maxmin_relax} is equivalent to a linear program,
\begin{align}
 \tilde{h}(x) &\coloneqq \min_{\mathbf{f} \in \mathbb{R}^r} \max_{i} \left| 1 - (x-y_i) \mathbf{f}^T \mathbf{g}(y_i) \right| 
 = \min_{\tilde{\mathbf{f}} \in \mathbb{R}^n} \max_{i} \left| 1 - (x-\tilde{y}_i) \tilde{\mathbf{f}}^T \tilde{\mathbf{g}}(\tilde{y}_i) \right| \notag \\
 &= \min \{ a \in \mathbb{R} : \ -a \le 1 - (x - \tilde{y}_i) \tilde{\mathbf{g}}(\tilde{y}_i)^T \tilde{\mathbf{f}} \le a, \ 1 \le i \le n+1, \ \tilde{\mathbf{f}} \in \mathbb{R}^n \}, \notag
\end{align}
  for some $\{ \tilde{g}_1, \cdots , \tilde{g}_n \} \subseteq \{ g_1, \cdots , g_r \}$ that are linearly independent when their domain is restricted
  to some $\{ \tilde{y}_1, \cdots , \tilde{y}_{n+1} \} \subseteq \{ y_1, \cdots , y_{r+1} \}$.
There is a minimizing $\tilde{\mathbf{f}}$ and $a$ that saturates one inequality per pair, which is one of the possible solutions of
\begin{equation}
\left[ \begin{array}{cc} (x - \tilde{y}_1) \tilde{\mathbf{g}}(\tilde{y}_1)^T & s_1 \\ \vdots & \vdots \\ (x - \tilde{y}_{n+1}) \tilde{\mathbf{g}}(\tilde{y}_{n+1})^T & s_{n+1} \end{array} \right]
 \left[ \begin{array}{c} \tilde{\mathbf{f}} \\ a \end{array} \right] = 
\left[ \begin{array}{c} 1 \\ \vdots \\ 1 \end{array} \right] , \ \ \ \ \mathbf{s} \in \{ -1, 1\}^{n+1}. \notag
\end{equation}
We solve for $a$ by using Cramer's rule and cofactor expansions into cofactors $\tilde{c}_i$
 and calculate $\tilde{h}(x)$ by minimizing over $\mathbf{s}$ to maximize the denominator,
\begin{align} \label{inner_solution}
 \tilde{h}(x) &= \min_{\mathbf{s} \in \{-1,1\}^{n+1}} \left| \frac{\sum_{i=1}^{n+1} \frac{\tilde{c}_i}{x - \tilde{y}_i}}{\sum_{i=1}^{n+1} \frac{s_i \tilde{c}_i}{x - \tilde{y}_i}} \right|
  = \frac{\left|\sum_{i=1}^{n+1} \frac{\tilde{c}_i}{x - \tilde{y}_i}\right|}{\sum_{i=1}^{n+1} \left| \frac{\tilde{c}_i}{x - \tilde{y}_i} \right|}, \\
  \tilde{c}_i &\coloneqq (-1)^i \det [ \ \tilde{\mathbf{g}}(\tilde{y}_1) \ \cdots \ \tilde{\mathbf{g}}(\tilde{y}_{i-1}) \ \tilde{\mathbf{g}}(\tilde{y}_{i+1}) \ \cdots \ \tilde{\mathbf{g}}(\tilde{y}_{n+1}) \ ], \notag
\end{align}
 which is well defined for $x \not\in \mathcal{Y}$ because there is at least one nonzero $\tilde{c}_i$ value.

The next relaxation follows from the systematically improvable approximation of $\tilde{h}(x)$ on $x \in \mathcal{X}$ with a maximum error $\delta>0$
 satisfying $\tilde{h}_{\delta}(y_i) = 1$ for $1 \le i \le r+1$,
\begin{equation}
 \tilde{h}_{\delta}(z) \coloneqq \frac{\left|\sum_{i=1}^{r+1} \frac{c_i}{z - y_i}\right|}{\sum_{i=1}^{r+1} \left| \frac{c_i}{z - y_i} \right|},
 \ \ \ \ c_i \coloneqq \left\{ \begin{array}{ll} 2 r \tilde{c}_j / \sum_{k=1}^{n+1} |\tilde{c}_k |, & \exists j \ \mathrm{s.t.} \  y_i = \tilde{y}_j \ \mathrm{and} \ \tilde{c}_j \neq 0 \\ \left( \frac{\min \mathcal{X} - \max \mathcal{Y}}{\max \mathcal{X} - \min \mathcal{Y}}\right)^2 \delta, & \mathrm{otherwise} \end{array} \right. . \notag
\end{equation}
We construct a lower bound for $\tilde{h}(x)$ using the triangle inequality and insert a trivial maximization of $1/\tilde{h}_{\delta}(y_i)$ over $1 \le i \le r+1$,
\begin{equation}
 \tilde{h}(x) \ge \tilde{h}_{\delta}(x) - \left| \tilde{h}_{\delta}(x) - \tilde{h}(x) \right| \ge
  \max_{i} \frac{\tilde{h}_{\delta}(x)}{\tilde{h}_{\delta}(y_i)} - \delta ,
 \ \ \ \ x \in \mathcal{X}. \notag
\end{equation}
We then replace the inner minimax problem in \cref{maxmin_relax} with its solution $\tilde{h}(x)$ in \cref{inner_solution} as the new optimand
 of the outer minimax problem and relax it as
\begin{equation} \label{penultimate_bound}
 \tilde{Z}_r(\mathcal{X},\mathcal{Y}) \ge \inf_{\mathbf{g} \in \mathcal{F}(\mathcal{Y})^r}
  \sup_{\substack{x \in \mathcal{X} \\ \{y_1, \cdots , y_{r+1}\} \subseteq \mathcal{Y}}}
  \max_i \frac{\tilde{h}_{\delta}(x)}{\tilde{h}_{\delta}(y_i)} - \delta ,
\end{equation}
 which is a valid lower bound for any $\delta > 0$.

The final relaxation expands and simplifies the minimization domain following a decoupling of minimization and maximization variables with the general form
\begin{equation} \label{decoupling_trick}
  \inf_{a \in \mathcal{A}} \sup_{c \in \mathcal{C}} f( b(a,c), c) = \inf_{b' \in \mathcal{B}} \sup_{c \in \mathcal{C}} f(b',c)
\end{equation}
 for any pair of functions, $f: \mathcal{B} \times \mathcal{C} \rightarrow \mathbb{R}$ and $b: \mathcal{A} \times \mathcal{C} \rightarrow \mathcal{B}$,
 such that for any $b' \in \mathcal{B}$ and $c \in \mathcal{C}$ there exists $a \in \mathcal{A}$ satisfying $b' = b(a,c)$.
For any $\tilde{h}_{\delta}(z)$, we can represent any $h_{\delta} \in \mathcal{R}_{r,r}$ in barycentric form \cite{barycentric_interpolation}
 using some $\mathbf{p},\mathbf{q} \in \mathbb{R}^{r+1}$ as
\begin{equation}
 h_{\delta}(z) \coloneqq \left( \sum_{i=1}^{r+1} \frac{p_i c_i}{z - y_i} \middle) \middle/ \middle( \sum_{i=1}^{r+1} \frac{q_i |c_i|}{z - y_i} \right) \notag
\end{equation}
  such that $\tilde{h}_\delta(x) = |h_{\delta}(x)|$ for $x \in \mathcal{X}$ when $p_i = q_i = 1$ for $1 \le i \le r+1$.
We insert $\mathbf{p}$ and $\mathbf{q}$ as minimization variables to relax \cref{penultimate_bound},
 apply \cref{decoupling_trick} to replace $\mathbf{g}$, $\mathbf{p}$, and $\mathbf{q}$ minimizations by $h \in \mathcal{R}_{r,r}$,
 and regroup the $\mathbf{y}$ and $i$ maximizations back to $y \in \mathcal{Y}$,
\begin{align}
 \tilde{Z}_r(\mathcal{X},\mathcal{Y}) &\ge \inf_{\substack{\mathbf{g} \in \mathcal{F}(\mathcal{Y})^r \\ \mathbf{p},\mathbf{q} \in \mathbb{R}^{r+1} }}
  \sup_{\substack{x \in \mathcal{X} \\ \{y_1, \cdots , y_{r+1}\} \subseteq \mathcal{Y}}}
  \max_i \left| \frac{h_{\delta}(x)}{h_{\delta}(y_i)} \right| - \delta \notag \\
   &= \inf_{h \in \mathcal{R}_{r,r}}
  \sup_{\substack{x \in \mathcal{X} \\ \{y_1, \cdots , y_{r+1}\} \subseteq \mathcal{Y}}}
  \max_i \left| \frac{h(x)}{h(y_i)} \right| - \delta
  = \inf_{h \in \mathcal{R}_{r,r}}
  \sup_{\substack{x \in \mathcal{X} \\ y \in \mathcal{Y}}} \left| \frac{h(x)}{h(y)} \right| - \delta . \notag
\end{align}
In the $\delta \rightarrow 0$ limit, this lower bound becomes $Z_r(\mathcal{X},\mathcal{Y})$ in \cref{simple_minimax} by \cref{Z_lemma}.
\end{proof}

With the proof concluded, it is worthwhile to highlight the details that constrain the $\mathcal{X}$ and $\mathcal{Y}$ domains in \cref{minimax_theorem} and \cref{Z_lemma}.
Realness and compactness enable the equioscillation of polynomial minimizers in \cref{poly_representation},
 thus constraining their roots to be simple and in a prescribed interval.
It is plausible that complex $p$ and $q$ minimizers of \cref{poly_representation} have roots respectively confined to convex hulls of $\mathcal{X}$ and $\mathcal{Y}$,
 but this is not straightforward to show.
Separation enables a corresponding separation between the minimization and maximization domains in \cref{simple_minimax}
 to guarantee a bounded continuous optimand without excluding possible minimizers.
Realness and separation enable the representation of $\tilde{h}(x)$ in \cref{inner_solution} as the modulus of a rational function for $x \in \mathcal{X}$ since
 $|x - y| = x - y$ for $y \in \mathcal{Y}$.
The inner minimax problem in \cref{maxmin_relax} still can be solved in the disjoint complex case by minimizing $\mathbf{s}$ in \cref{inner_solution} over $\{ \phi \in \mathbb{C} : |\phi| = 1 \}$ rather than $\{-1,1\}$,
 but the resulting lower bound is not $Z_r(\mathcal{X}, \mathcal{Y})$ and may be unattainable.

\section{Solutions of \cref{simple_minimax}\label{many_solutions}}

While \cref{minimax_theorem} relates minimizers of \cref{minimax} and \cref{simple_minimax},
 it does not provide specific solutions to the optimization problem in \cref{simple_minimax} or ways to construct them.
Here we discuss some analytical, numerical, and heuristic solutions.
First, we review the analytical solutions of $Z_n(\lambda) \coloneqq Z_n( [\lambda , 1], [ -1 , -\lambda] )$ for $\lambda \in (0,1)$
 corresponding to Zolotarev's third problem \cite{zolotarev.review,minimax.rational}.
Next, we prescribe an iterative algorithm that converges quadratically to numerical solutions of $Z_n(\mathcal{X},\mathcal{Y})$ for $\mathcal{X}$ and $\mathcal{Y}$
 that are finite unions of closed intervals.
Optimality of these solutions is certified by their characterization in \cref{Z_lemma}.
Finally, we construct heuristic solutions for $\mathcal{X}$ and $\mathcal{Y}$ of finite cardinality using the analytical solutions of $Z_n(\lambda)$
 and compare them to numerical solutions over a simple statistical distribution of $\mathcal{X}$ and $\mathcal{Y}$.

About the uniqueness of solutions, the minimizers of \cref{simple_minimax} and \cref{Z_def} are unique up to a choice of ordering and normalization if $Z_n(\mathcal{X}, \mathcal{Y}) \neq 0$,
 but minimizers of \cref{minimax} are not unique.
We have prescribed a convenient normalization of $h$ in \cref{h_normalization},
 and a similarly convenient ordering of $\tilde{\mathbf{x}}$ and $\tilde{\mathbf{y}}$ that is compatible with \cref{Z_equioscillation} is
\begin{equation}
 x_i < \tilde{x}_i < x_{i+1} , \ \ \ \ y_i > \tilde{y}_i > y_{i+1} , \ \ \ \ 1 \le i \le n.
\end{equation}
When the minimizing $\tilde{\mathbf{x}}$ and $\tilde{\mathbf{y}}$ of \cref{simple_minimax} are unique,
 there is also a unique $\mathbf{f}(x)^T \mathbf{g}(y)$ of the form in \cref{skeleton} that minimizes \cref{minimax}.
However, the minimized maximum in \cref{minimax} is only attained at $(r+1)^2$ pairs of $x$ and $y$ values defined by \cref{Z_equioscillation}.
We can alter $\mathbf{f}(x)$ and $\mathbf{g}(y)$ for any $x$ or $y$ not contained in this subset of points without changing the maximum,
 thus unique minimizers of \cref{minimax} require extra constraints such as \cref{skeleton}.

\subsection{Analytical solutions\label{analytical_solution}}

Zolotarev solved four problems in polynomial and rational approximation using elliptic functions \cite{zolotarev.review},
 and his third problem was $Z_n(\lambda)$.
Its original reference \cite{minimax.rational} has no English translation, but a review of the solution is available in English \cite[Chap.\@ 9]{elliptic_functions}.
We use an independent solution from the appendix of \cite{Zolotarev_derivation},
 in which the roots $\tilde{\mathbf{x}}$, poles $\tilde{\mathbf{y}}$, extrema $\mathbf{x}$ in $\mathcal{X}$, and extrema $\mathbf{y}$ in $\mathcal{Y}$
 of $h$ in \cref{Z_def} are uniformly spaced in a mapped domain
 defined by a Jacobi elliptic function,
\begin{align} \label{zolotarev_solution}
  \tilde{x}_i &= -\tilde{y}_i = \xi\left( \frac{i - 1/2}{n}\right), \ \ \ \ 1 \le i \le n, \\
   x_j  &= -y_j = \xi\left( \frac{j - 1}{n} \right), \ \ \ \ 1 \le j \le n+1, \notag \\
 \xi(v) &\coloneqq \mathrm{dn}\left( (1 - v) K\left(\sqrt{1 - \lambda^2}\right), \sqrt{1 - \lambda^2}\right), \notag
\end{align}
 for the delta amplitude $\mathrm{dn}(u,k)$ with quarter period $K(k)$ as specified by the elliptic modulus $k$ \cite[Chap.\@ 22]{special_functions}.
These functions have to be evaluated carefully when $\lambda$ is small enough for $\sqrt{1 - \lambda^2}$ to be rounded to $1$.
For example, $K$ can be evaluated using the imaginary quarter period $K'$ as $K\left(\sqrt{1 - \lambda^2}\right) = K'(\lambda)$,
 and $\xi(v)$ can be evaluated recursively by the ascending Landen transformation \cite[eq. (22.7.8)]{special_functions} for small $\lambda$.

Because $\mathcal{R}_{n,n}$ is invariant to M\"{o}bius transformations of the domain, we can use a M\"{o}bius transformation to map \cref{zolotarev_solution}
 to the solution of \cref{Z_def} for $\mathcal{X} = [x_{\min} , x_{\max}]$ and $\mathcal{Y} = [y_{\min} , y_{\max}]$ such that $x_{\min} > y_{\max}$
 if $\lambda$ is chosen to match cross-ratios,
\begin{align} \label{mobius_transform}
  \lambda = \frac{\left( \sqrt{|(x_{\max}-x_{\min})(y_{\max}-y_{\min})|} - \sqrt{|(x_{\min}-y_{\min})(x_{\max}-y_{\max})|} \right)^2}{|(x_{\min} - y_{\max})(x_{\max} - y_{\min})|} , \\
    z \mapsto - \frac{ (1-\lambda) (\lambda+z) x_{\min} x_{\max} + (1+\lambda) (\lambda - z) y_{\max} x_{\max} + 2 \lambda (z-1) y_{\max} x_{\min} }
 { (1-\lambda) (\lambda + z) y_{\max} + (1+\lambda) (\lambda - z) x_{\min}  + 2 \lambda (z - 1) x_{\max}} . \notag
\end{align}
This domain mapping does not change the value of $Z_n(\mathcal{X},\mathcal{Y})$.
It also works for any $\mathcal{Y} = [ y_{\min},\infty) \cup (-\infty,y_{\max}] $ that additionally satisfies $y_{\min} > x_{\max}$.
\cref{minimax_theorem} and \cref{Z_lemma} can be extended to a closed $\mathcal{Y} = \mathcal{Y}_+ \cup \mathcal{Y}_-$ satisfying
 $\min \mathcal{Y}_+ > \max \mathcal{X}$ and $\min \mathcal{X} > \max \mathcal{Y}_-$ by incorporating this M\"{o}bius transformation into their proofs.

Several limits and bounds are useful for applying and analyzing these solutions.
Tight lower and upper bounds on $Z_n(\lambda)$ are known for large $n$ \cite[Cor. 3.2]{displacement.rank.review},
\begin{align} \label{Z_bounds}
 \frac{4 \rho^{-2n}}{(1 + \rho^{-4n})^4} &\le Z_n(\lambda) = \prod_{i=1}^n \left( \frac{1 - \tilde{x}_i }{1 + \tilde{x}_i } \right)^2 \le \frac{4 \rho^{-2n}}{(1 + \rho^{-4n})^2} \le 4 \rho^{-2n} \le 4 \tilde{\rho}^{-2n}, \\
  \rho &\coloneqq \exp\left( \pi \frac{K(\lambda)}{K'(\lambda)} \right), \ \ \ \ \tilde{\rho} \coloneqq \exp\left(\frac{\pi^2/2}{\log(4/\lambda)} \right), \notag
\end{align}
 and $\tilde{\rho}$ approaches $\rho$ in the limit of small $\lambda$ \cite[eq.\@ (19.9.5)]{special_functions}.
The $\xi(v)$ map function reduces to elementary special functions as $\lambda$ approaches its limiting values of $0$ and $1$,
\begin{equation} \label{map_limits}
 \lim_{\lambda \rightarrow 0} \frac{\log(\xi(v))}{\log(\lambda)} = 1 - v, \ \ \ \  \lim_{\lambda \rightarrow 1} \frac{1 - \xi(v)}{1-\lambda} = \frac{1 + \cos(\pi v)}{2},
\end{equation}
 from limits \cite[Table 22.5.4]{special_functions} and series expansions \cite[eq.\@ (22.11.3)]{special_functions} of $\mathrm{dn}(u,k)$.

\subsection{Numerical solutions\label{numerical_solution}}

We represent minimizers of \cref{simple_minimax} and \cref{Z_def} as
\begin{equation}
 h(z) = \exp(b) \prod_{i=1}^n \frac{z - \tilde{x}_i}{z - \tilde{y}_i}, \ \ \ \ \tilde{x}_j < \tilde{x}_{j+1},\ \ \ \ \tilde{y}_j > \tilde{y}_{j+1}, \ \ \ \ 1 \le j \le n-1, \notag
\end{equation}
 for $b \in \mathbb{R}$ and label local extrema between roots and poles of the $\mathcal{X}$ and $\mathcal{Y}$ domains as
\begin{equation} \label{local_maximizers}
 x_i \in \underset{x \in \mathcal{X} \cap [\tilde{x}_i , \tilde{x}_{i+1}]}{\mathrm{arg \, max}} |h(x)|, \ \ \ \
 y_i \in \underset{y \in \mathcal{Y} \cap [\tilde{y}_{i+1} , \tilde{y}_{i}]}{\mathrm{arg \, max}} |h(y)|^{-1}, \ \ \ \ 
 1 \le i \le n+1.
\end{equation}
Consistent with the characterization of minimizers in \cref{Z_lemma},
 we only consider $\tilde{\mathbf{x}}$ and $\tilde{\mathbf{y}}$ for which these maximization domains are not empty.
We use the logarithm of the equioscillation constraints in \cref{Z_equioscillation} to characterize numerical solutions by
\begin{equation} \label{log_characterization}
 a = \log | h(x_i) | , \ \ \ \  a = - \log | h(y_i) | , \ \ \ \ 1 \le i \le n+1,
\end{equation}
 for an unknown equioscillation magnitude $a$.
Starting from an initial trial minimizer,
 we iterative refine its variables $a$, $b$, $\tilde{\mathbf{x}}$, and $\tilde{\mathbf{y}}$ until \cref{log_characterization} is satisfied.

Since \cref{log_characterization} is nonlinear in $\tilde{\mathbf{x}}$ and $\tilde{\mathbf{y}}$,
 we linearize the equations in these variables to calculate first-order corrections $\delta \tilde{\mathbf{x}}$ and $\delta \tilde{\mathbf{y}}$.
They are defined by the linear system
\begin{align} \label{linear_search}
 & \ \ \ \, \left[ \begin{array}{cccc} \mathbf{1} & -\mathbf{1} & \mathbf{C}(\mathbf{x},\tilde{\mathbf{x}}) & -\mathbf{C}(\mathbf{x},\tilde{\mathbf{y}}) \\
 \mathbf{1}  & \mathbf{1} & -\mathbf{C}(\mathbf{y},\tilde{\mathbf{x}}) & \mathbf{C}(\mathbf{y},\tilde{\mathbf{y}}) \end{array} \right]
 \left[ \begin{array}{c} a \\ b \\ \delta \tilde{\mathbf{x}} \\ \delta \tilde{\mathbf{y}} \end{array} \right] =
 \left[ \begin{array}{c} \mathbf{c} \\ \mathbf{d} \end{array} \right] , \\
  c_i &= \sum_{j=1}^n \log \left| \frac{x_i - \tilde{x}_j}{x_i - \tilde{y}_j} \right| , \ \ \ \ d_i = \sum_{j=1}^n \log \left| \frac{y_i - \tilde{y}_j}{y_i - \tilde{x}_j} \right| , \ \ \ \ 1 \le i \le n+1 \notag ,
\end{align}
 where $\mathbf{1}$ is a vector with all elements equal to one.
A $(2n+2)$-by-$(2n+1)$ submatrix of this matrix equation is a diagonally-weighted Cauchy matrix, which facilitates an analytical solution.
We can solve it using Cramer's rule, cofactor expansions, and the Cauchy determinant formula \cite[eq.\@ (4)]{Cauchy_formulas}, which results in
\begin{align} \label{search_formula}
 a &= \frac{\sum_i (c_i p_i + d_i q_i)}{\sum_i (p_i + q_i)}, \ \ \ \ p_i = \frac{ \prod_j (x_i - \tilde{y}_j) (x_i - \tilde{x}_j) }{\prod_j (x_i - y_j) \prod_{j \neq i} (x_i - x_j)} , \\
 b = \sum_{i} (x_i (a - c_i & ) p_i + y_i (a - d_i) q_i), \ \ \ \ q_i = \frac{ \prod_j (\tilde{x}_j - y_i) (\tilde{y}_j - y_i) }{\prod_{j} (x_j - y_i) \prod_{j \neq i} (y_j - y_i)} , \notag \\
 \delta \tilde{x}_i &= \frac{ \prod_j (y_j - \tilde{x}_i) (x_j - \tilde{x}_i) }{\prod_j (\tilde{y}_j - \tilde{x}_i) \prod_{j \neq i} (\tilde{x}_j - \tilde{x}_i)} \sum_j \left( \frac{a - c_j}{\tilde{x}_i - x_j}p_j + \frac{a - d_j}{\tilde{x}_i - y_j}q_j \right) , \notag \\
 \delta \tilde{y}_i &= \frac{ \prod_j (\tilde{y}_i - x_j) (\tilde{y}_i - y_j) }{\prod_j (\tilde{y}_i - \tilde{x}_j) \prod_{j \neq i} (\tilde{y}_i - \tilde{y}_j)} \sum_j \left( \frac{a - c_j}{\tilde{y}_i - x_j}p_j + \frac{a - d_j}{\tilde{y}_i - y_j}q_j \right) . \notag
\end{align}
The equioscillation conditions are satisfied when the elements of $\delta \tilde{\mathbf{x}}$ and $\delta \tilde{\mathbf{y}}$ are zero,
 which corresponds to equal-element right-hand side vectors, $\mathbf{c} = c_1 \mathbf{1}$ and $\mathbf{d} = d_1 \mathbf{1}$.

While the linearization of $\tilde{\mathbf{x}}$ and $\tilde{\mathbf{y}}$ is convenient for defining \cref{linear_search},
 nonlinearities are strong in these variables and can stagnate an iterative solution process.
With the search direction for updated solution variables $\tilde{\mathbf{x}}'$ and $\tilde{\mathbf{y}}'$ defined by
\begin{equation} \label{old_search}
 \tilde{\mathbf{x}}' = \tilde{\mathbf{x}} + \alpha \, \delta \tilde{\mathbf{x}}, \ \ \ \ \tilde{\mathbf{y}}' = \tilde{\mathbf{y}} + \alpha \, \delta \tilde{\mathbf{y}}, \ \ \ \
 \alpha \in [0,1],
\end{equation}
 the largest deviation from satisfying \cref{log_characterization} can always be reduced for sufficiently small $\alpha$,
 but the total amount of reduction per linear solution update might be small.
We find nonlinearities to be substantially weaker in variables $\mathbf{s}$ and $\mathbf{t}$ defined by
\begin{equation}
 s_i = \log \left( \frac{\tilde{x}_i - x_i}{x_{i+1} - \tilde{x}_i} \right), \ \ \ \ t_i = \log \left( \frac{\tilde{y}_i - y_{i+1}}{y_{i} - \tilde{y}_i} \right), \ \ \ \ 1 \le i \le n. \notag
\end{equation}
The first-order corrections in these two sets of variables are linearly related,
 and we can define the related search direction for updated variables $\mathbf{s}'$ and $\mathbf{t}'$ as
\begin{align} \label{new_search}
  \mathbf{s}' = \mathbf{s} + \alpha \, \delta \mathbf{s}, \ \ \ \ \mathbf{t}' &= \mathbf{t} + \alpha \, \delta \mathbf{t}, \ \ \ \
 \alpha \in [0,1], \\
 \delta s_i = \frac{\delta \tilde{x}_i (x_{i+1}-x_i)}{(x_{i+1} - \tilde{x}_i)(\tilde{x}_i - x_i)}, \ \ \ \
 \delta t_i &= \frac{\delta \tilde{y}_i (y_i - y_{i+1})}{(y_i - \tilde{y}_i)(\tilde{y}_i - y_{i+1})}, \ \ \ \
 1 \le i \le n. \notag
\end{align}
In practice, we observe that \cref{new_search} reduces the largest deviation from satisfying \cref{log_characterization}
 for larger $\alpha$ values than \cref{old_search}, producing a larger overall reduction.

We implement\footnote{An ANSI C implementation is available in the supplementary materials and will be maintained on GitHub at https://github.com/godotalgorithm/zolotarev-number.}
 a simple algorithm that is greater than $99 \%$ reliable in practice.
Since $Z_n(\mathcal{X},\mathcal{Y})$ is invariant to M\"{o}bius transformations,
 we transform sets to satisfy
\begin{equation}
 \min \mathcal{Y} = -1, \ \ \ \ \max \mathcal{Y} = -\lambda, \ \ \ \ \min \mathcal{X} = \lambda, \ \ \ \ \max \mathcal{X} = 1, \notag
\end{equation}
 for some $\lambda \in (0,1)$ to improve numerical behavior.
The initial values of $\tilde{\mathbf{x}}$ and $\tilde{\mathbf{y}}$ are constructed by inserting $n+1$ points each into $\xi^{-1}(\mathcal{X})$ and $\xi^{-1}(-\mathcal{Y})$ for $\xi$ in \cref{zolotarev_solution},
 starting at zero and inserting new points as far as possible from previously inserted points,
 ordering them into vectors $\mathbf{a}$ and $\mathbf{b}$, and assigning $\tilde{x}_i = \xi(\frac{1}{2}(a_i + a_{i+1}))$ and $\tilde{y}_i = -\xi(\frac{1}{2}(b_i + b_{i+1}))$.
The main iterative loop alternates between calculating local extrema $\mathbf{x}$ and $\mathbf{y}$ in \cref{local_maximizers},
 calculating corrections $\delta \tilde{\mathbf{x}}$ and $\delta \tilde{\mathbf{y}}$ in \cref{search_formula},
 and performing a search over $\alpha$ in \cref{new_search} to update $\tilde{\mathbf{x}}$ and $\tilde{\mathbf{y}}$.
We choose $\alpha$ to minimize the difference between the largest and smallest values of $\log | h(x_i) |$ and $-\log | h(y_i)|$ in \cref{log_characterization} using a golden section search
 and terminate the loop when this quantity can no longer be decreased.
It converges quadratically in an asymptotic regime of small $\|\delta \tilde{\mathbf{x}}\|$ and $\|\delta \tilde{\mathbf{y}}\|$ before stagnating at its numerical floor.
This algorithm requires $O(n)$ memory, and each update of $\mathbf{x}$, $\mathbf{y}$, $\delta \tilde{\mathbf{x}}$, $\delta \tilde{\mathbf{y}}$, $\tilde{\mathbf{x}}$, or $\tilde{\mathbf{y}}$
 in each iteration requires $O(n^2)$ operations.
We restrict the implementation to $\mathcal{X}$ and $\mathcal{Y}$ that are both finite unions of closed intervals
 to simplify the test of set inclusion for $\mathcal{X}$ and $\mathcal{Y}$ to a binary search.

Our simple algorithm and implementation have some theoretical and numerical limitations that could be improved with further development effort.
We do not have a rigorous explanation for the effectiveness of the search direction in \cref{new_search} over the straightforward choice in \cref{old_search}.
While a continuous infinitesimal update of $\tilde{\mathbf{x}}$ and $\tilde{\mathbf{y}}$ by $\delta \tilde{\mathbf{x}}$ and $\delta \tilde{\mathbf{y}}$ in \cref{search_formula}
 with continuous updates of the local extrema $\mathbf{x}$ and $\mathbf{y}$ in \cref{local_maximizers}
 monotonically reduces the minimand of $Z_n(\mathcal{X}, \mathcal{Y})$ until it is minimized,
 our iterative algorithm has convergence behavior that is not so straightforward to analyze.
Even though rapid convergence occurs in most test cases, there are infrequent pathological cases that either stagnate or fail to converge.
We also observe numerical problems when a root or pole of a minimizing $h$
 approaches a local extremum at an isolated point in $\mathcal{X}$ or $\mathcal{Y}$ with
 a distance that exponentially decreases in $n$.
The approximate floating-point representations of these numbers become identical
 even though their exact difference is nonzero and can be approximated accurately with a floating-point number.
This numerical problem can be repaired by representing roots and poles as differences from the nearest local extremum, which complicates the implementation.
Also, it remains possible to compute $\log Z_n(\mathcal{X}, \mathcal{Y})$ and $\log | h(z) |$ when $Z_n(\mathcal{X}, \mathcal{Y})$ and
 $h(z)$ underflow their approximate floating-point representations by carefully avoiding
 intermediate quantities that can underflow and computing the logarithm of products that can underflow as sums of their individual logarithms.
Such an implementation would sacrifice some performance in exchange for reliability because logarithms are more computationally expensive than multiplication and division.

\subsection{Heuristic solutions\label{heuristic_solutions}}

For some applications of \cref{minimax_theorem} and $Z_n(\mathcal{X}, \mathcal{Y})$,
 a heuristic solution can be as useful as the exact minimizer
 if upper bounds such as in \cref{Z_bounds} are used instead of the exact value of $Z_n(\mathcal{X}, \mathcal{Y})$
 and the minimand of \cref{simple_minimax} still satisfies these bounds with the heuristic solution.
The simplest example of this is using the minimizer of $Z_n([\min \mathcal{X}, \max \mathcal{X}], [\min \mathcal{Y}, \max \mathcal{Y}])$
 from \cref{analytical_solution} as a heuristic solution of $Z_n(\mathcal{X}, \mathcal{Y})$.
However, a single outlying element of either $\mathcal{X}$ or $\mathcal{Y}$ can degrade the accuracy of this heuristic solution.
Here we generalize this heuristic solution to be more accurate when $\mathcal{X}$ and $\mathcal{Y}$ both have finite cardinality.

To construct heuristic solutions, we label the set elements as $\mathcal{X} = \{ x_1, \cdots , x_{|\mathcal{X}|}\}$ and
 $\mathcal{Y} = \{ y_1, \cdots , y_{|\mathcal{Y}|}\}$ ordered such that $x_{i-1} < x_{i}$ for $2 \le i \le |\mathcal{X}|$ and $y_{i-1} > y_{i}$ for $2 \le i \le |\mathcal{Y}|$.
We then partition these sets using non-negative integers $n_-$ and $n_+$ as
\begin{align}
 \mathcal{X} = \mathcal{X}_- \cup \mathcal{X}_0 \cup \mathcal{X}_+ , \ \ \ \
 \mathcal{X}_- &\coloneqq \{x_1 , \cdots , x_{n_-} \} , \notag \\
\mathcal{X}_0 &\coloneqq \{x_{n_- + 1} , \cdots , x_{|\mathcal{X}| - n_+} \}, \notag \\
\mathcal{X}_+ &\coloneqq \{x_{|\mathcal{X}| - n_+ + 1} , \cdots , x_{|\mathcal{X}|} \} , \notag \\
 \mathcal{Y} = \mathcal{Y}_- \cup \mathcal{Y}_0 \cup \mathcal{Y}_+ , \ \ \ \ 
\mathcal{Y}_- &\coloneqq \{y_1 , \cdots , y_{n_-} \} , \notag \\
\mathcal{Y}_0 &\coloneqq \{y_{n_- + 1} , \cdots , y_{|\mathcal{Y}| - n_+} \}, \notag \\
\mathcal{Y}_+ &\coloneqq \{y_{|\mathcal{Y}| - n_+ + 1} , \cdots , y_{|\mathcal{Y}|} \} , \notag
\end{align}
 which must satisfy $n_- + n_+ \le n$.
The elements of $\tilde{\mathbf{x}}$ and $\tilde{\mathbf{y}}$ are heuristically chosen to reduce the objective function in \cref{simple_minimax} on different parts of its domain.
We reduce it to zero if either $x \in \mathcal{X}_- \cup \mathcal{X}_+$ or $y \in \mathcal{Y}_- \cup \mathcal{Y}_+$ by choosing $n_- + n_+$ elements
 to cover $\mathcal{X}_- \cup \mathcal{X}_+$ and $\mathcal{Y}_- \cup \mathcal{Y}_+$ respectively.
We set the remaining $n - n_- - n_+$ elements to the analytical solution in \cref{analytical_solution} defined by
 $x_{\min} = x_{n_- + 1}$, $x_{\max} = x_{|\mathcal{X}| - n_+}$, $y_{\min} = y_{|\mathcal{Y}| - n_+}$, and $y_{\max} = y_{n_- + 1}$,
 to bound from above their contributions to the objective function by $Z_{n - n_- - n_+}(\lambda)$ for $\lambda$ in \cref{mobius_transform}.
The remaining contributions when $x \in \mathcal{X}_0$ and $y \in \mathcal{Y}_0$ can be grouped into cross-ratios and independently maximized
 to construct an upper bound on $Z_n(\mathcal{X}, \mathcal{Y})$ that is satisfied by this heuristic solution,
\begin{align} \label{heuristic_upper_bound}
 Z_n(\mathcal{X}, \mathcal{Y}) & \le Z_{n - n_- - n_+}(\lambda) \prod_{i=1}^{n_-} \frac{(x_{|\mathcal{X}| - n_+} - x_{i}) (y_{i} - y_{|\mathcal{Y}| - n_+})}{(x_{|\mathcal{X}| - n_+} - y_{i}) (x_{i} - y_{|\mathcal{Y}| - n_+})} \\
 & \ \ \ \times \prod_{i=0}^{n_+ - 1} \frac{(x_{|\mathcal{X}| - i} - x_{n_- + 1}) (y_{n_- + 1} - y_{|\mathcal{Y}| - i})}{(x_{|\mathcal{X}| - i} - y_{n_- + 1})(x_{n_- + 1} - y_{|\mathcal{Y}| - i})} \le Z_{n - n_- - n_+}(\lambda) \notag .
\end{align}
The simplest heuristic solution and its upper bound are recovered for  $n_- = n_+ = 0$.
Computing these upper bounds do not require any optimization steps,
 but the most accurate heuristic solutions are obtained by minimizing a bound over $n_-$ and $n_+$.

\begin{figure}[t]
  \centering
  \label{fig_heuristic}\includegraphics{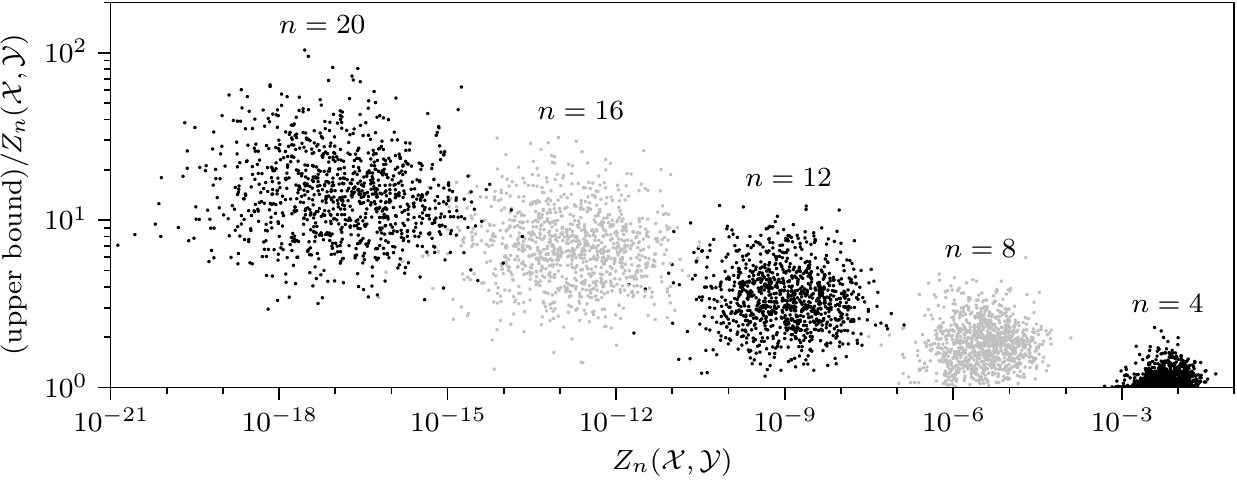}
  \caption{Tightness of the first upper bound in \cref{heuristic_upper_bound} for 1000 samples drawn from a distribution of $\mathcal{X}$ and $\mathcal{Y}$
 that each contain 100 points sampled from uniform distributions over $(0,1]$ and $[-1,0)$ respectively.
 The color of data points is alternated between neighboring $n$ values for contrast.}
\end{figure}

To examine the practical value of these heuristic solutions, we compare them to numerical solutions in \cref{fig_heuristic}.
The upper bounds of $Z_n(\mathcal{X}, \mathcal{Y})$ in \cref{heuristic_upper_bound} loosen with increasing $n$,
 which corresponds to an increasing suboptimality of heuristic solutions.
However, the overall exponential decay, $Z_n \approx 0.12^{n-2}$, greatly outpaces the growing inefficiency, $\tilde{Z}_n / Z_n \approx 1.2^{n - 4}$,
 in the upper bound $\tilde{Z}_n$ that is satisfied by the heuristic solution.
Asymptotically, a fractional increase in $n$ enables the heuristic solution to match the decay of the optimal solution, $\tilde{Z}_{\lceil 1.1 n \rceil} \approx Z_n$.
Thus, heuristic solutions are nearly as effective as numerical solutions in this example, especially for small $n$.

Heuristic solutions can be further extended to $\mathcal{X}$ and $\mathcal{Y}$ that are finite unions of closed intervals
 by choosing outlying subsets in $\mathcal{X}$ or $\mathcal{Y}$ and using analytical solutions for covering intervals of these subsets to bound the objective function. 
However, we will need greedy strategies for partitioning to avoid a high-dimensional combinatorial optimization of partitioning parameters.
With only two parameters, $n_+$ and $n_-$, it is inexpensive to minimize the upper bounds in \cref{heuristic_upper_bound} exhaustively,
 but this strategy is not efficiently scalable to larger numbers of parameters.

\section{Comparison between skeleton decompositions and truncated SVDs\label{svd_comparison}}

For a skeleton decomposition to be as useful in practice as a truncated SVD, it ought
 to retain their beneficial numerical properties and be of comparable flexibility as an approximant.
A matrix $\mathbf{A}$ factored into its SVD, $\mathbf{A} = \mathbf{U} \mathbf{D} \mathbf{V}^T$,
 is numerically stable to reconstruct by multiplying $\mathbf{U}$, $\mathbf{D}$, and $\mathbf{V}^T$
 since $\mathbf{U}$ and $\mathbf{V}$ are orthogonal matrices that do not amplify rounding errors in floating-point arithmetic.
This is not the case for a skeleton decomposition that is grouped into a product of Cauchy matrices and their inverses as in \cref{skeleton},
 whereby an ill-conditioned intermediate matrix can amplify rounding errors during matrix multiplication.
Regarding flexibility, a truncated SVD minimizes \cref{svd_optimal} and a skeleton decomposition minimizes \cref{minimax},
 but their effectiveness as approximate minimizers of the error metrics for which they are suboptimal is not obvious.
The optimal error sets a lower bound on their error, and the equivalence of norms sets an upper bound.
Their actual errors can be anywhere in between,
 and it is possible for one approximant to be more transferrable between error metrics.

In this section, we demonstrate the numerical stability and flexibility of skeleton decompositions using numerical examples and limited theoretical analysis.
Efficiently computable condition numbers are defined for the purpose of quantifying numerical stability,
 and they are observed to be small in practice.
The coefficients that govern norm equivalence are then derived,
 and a common exponential decay is observed for both error metrics and both approximants.
However, the prefactors of this common exponential decay have substantial variations between metrics and approximants.

\subsection{Stable forms\label{stable_form}}

Our numerical analysis of matrix decompositions utilizes a generic upper bound for errors in floating-point summation \cite[eq.\@ (2.6)]{floating_point_summation},
\begin{equation} \label{sum_error}
 \left| fl\left( \sum_{i=1}^n a_i \right) - \sum_{i=1}^n a_i \right| \le \epsilon_n \sum_{i=1}^n | a_i | ,
\end{equation}
 where $fl(x)$ refers to the unspecified evaluation of $x$ in floating-point arithmetic and
 $\epsilon_n \approx (n-1) \epsilon_2$ is the specific error associated with the summation of $n$ floating-point numbers.
It is often useful to relax the absolute sum on the right-hand side of \cref{sum_error} into a weaker but more convenient expression.
For example, the elementwise error in reconstructing a matrix $\mathbf{A} \in \mathbb{R}^{m \times n}$ from its SVD, $\mathbf{A} = \mathbf{U} \mathbf{D} \mathbf{V}^T$, can be relaxed to
\begin{equation} \label{svd_rounding}
 \left| fl\left([\mathbf{U} \mathbf{D} \mathbf{V}^T]_{i,j}\right) - [\mathbf{A}]_{i,j} \right| \le \epsilon_{\min\{m,n\}} \| \mathbf{A} \|_2, \ \ \ \ 1 \le i \le m,  \ \ \ \ 1 \le j \le n,
\end{equation}
through bounding the elements of $\mathbf{D}$ by their maximum value $\|\mathbf{A}\|_2$
  and the sum over columns of $\mathbf{U}$ and $\mathbf{V}$ by one using the Cauchy--Schwarz inequality.
We seek to modify \cref{skeleton} into one or more stable forms and establish an error bound similar to \cref{svd_rounding}.

We construct three numerically stable forms for \cref{skeleton} by regrouping its matrices into
 one-sided and two-sided interpolative matrix decompositions \cite{interpolative_decomposition3},
\begin{align} \label{interpolant_matrices}
 &\mathbf{f}(x)^T \mathbf{g}(y) = \mathbf{u}(x)^T \mathbf{C}(\tilde{\mathbf{x}},y) = \mathbf{C}(x,\tilde{\mathbf{y}}) \mathbf{v}(y) = \mathbf{u}(x)^T \mathbf{C}(\tilde{\mathbf{x}},\tilde{\mathbf{y}}) \mathbf{v}(y), \\
 & \ \ \ \ \mathbf{u}(x) \coloneqq \mathbf{C}(\tilde{\mathbf{y}},\tilde{\mathbf{x}})^{-1} \mathbf{C}(\tilde{\mathbf{y}},x) , \ \ \ \ \mathbf{v}(y) \coloneqq \mathbf{C}(\tilde{\mathbf{x}},\tilde{\mathbf{y}})^{-1} \mathbf{C}(\tilde{\mathbf{x}},y). \notag
\end{align}
The interpolation vectors $\mathbf{u}(x)$ and $\mathbf{v}(y)$ combine the Lagrange polynomials
 from the Cauchy matrix inverse formula \cite[eq.\@ (7)]{Cauchy_formulas} with rational weight functions in $\mathcal{R}_{0,r}$,
\begin{equation}
 u_i(x) = L_i(x, \tilde{\mathbf{x}}) \prod_{j=1}^r \frac{\tilde{x}_i - \tilde{y}_j}{x - \tilde{y}_j}, \ \ \ \ 
 v_i(y) = L_i(y, \tilde{\mathbf{y}}) \prod_{j=1}^r \frac{\tilde{x}_j - \tilde{y}_i}{\tilde{x}_j - y}, \notag
\end{equation}
 using the notation $L_i(x,\mathbf{y}) \coloneqq \prod_{j\neq i} (x - y_j)/(y_i - y_j)$ for Lagrange polynomials.
Both $\mathbf{u}(x)$ and $\mathbf{v}(y)$ retain the interpolation property of their Lagrange polynomials,
\begin{equation}
 u_i(\tilde{x}_j) = v_i(\tilde{y}_j) = \left\{ \begin{array}{ll} 1 , & i = j \\ 0 , & i \neq j \end{array} \right. , \ \ \ \
 1 \le i,j \le r, \notag
\end{equation}
 and also their normalization property,
\begin{equation}
 \sum_{i=1}^r u_i(z) =  \sum_{i=1}^r v_i(z) = 1. \notag
\end{equation}
Unfortunately, $u_i(x)$ for $x \in \mathcal{X}$ and $v_i(y)$ for $y \in \mathcal{Y}$ are not partitions of unity because they can have negative values,
 and large negative values can be a source of numerical instability.
We can compute both $\mathbf{u}(z)$ and $\mathbf{v}(z)$ to high relative accuracy with $O(r)$ operations
 by using the modified Lagrange formula \cite[eq.\@ (3.1)]{barycentric_interpolation_stability} and precomputing all $z$-independent terms.
Thus the numerical errors from evaluating $\mathbf{u}(x)$ and $\mathbf{v}(y)$ in \cref{interpolant_matrices} are negligible
 relative to the numerical errors in the matrix products.

To compare the numerical errors and low-rank approximation errors in skeleton decompositions directly,
 we consider pointwise relative error bounds on the numerical errors that are compatible with \cref{minimax}.
For the left-sided and right-sided interpolative matrix decompositions in \cref{interpolant_matrices},
 these numerical error bounds are respectively
\begin{align}
 \left| fl \left( \mathbf{u}(x)^T \mathbf{C}(\tilde{\mathbf{x}},y) \right) - \mathbf{f}(x)^T \mathbf{g}(y) \right| &\le \epsilon_r \frac{\kappa_r(\mathcal{X},\mathcal{Y})}{|x - y|} \ \ \ \ \mathrm{and} \\
 \left| fl \left( \mathbf{C}(x,\tilde{\mathbf{y}}) \mathbf{v}(y) \right) - \mathbf{f}(x)^T \mathbf{g}(y) \right| &\le \epsilon_r \frac{\kappa_r(\mathcal{Y},\mathcal{X})}{|x - y|}, \ \ \ \ 
 (x,y) \in \mathcal{X} \times \mathcal{Y}, \notag
\end{align}
 for a relative condition number $\kappa_r(\mathcal{X}, \mathcal{Y})$ that satisfies \cref{sum_error} for all $x \in \mathcal{X}$ and $y \in \mathcal{Y}$.
We define a convenient but suboptimal relative condition number to be
\begin{equation} \label{skeleton_condition}
 \kappa_r(\mathcal{X}, \mathcal{Y}) \coloneqq \max_{x \in \mathcal{X}} \sum_{i=1}^r \max_{y \in \mathcal{Y}}\left| u_i(x)\frac{x - y}{\tilde{x}_i - y} \right|,
\end{equation}
 where $\tilde{\mathbf{x}}$ and $\tilde{\mathbf{y}}$ are the minimizers of \cref{simple_minimax}.
While it is possible to decrease $\kappa_r(\mathcal{X}, \mathcal{Y})$ by moving the maximization over $y$ outside of the summation,
 this increases the cost of computing $\kappa_r(\mathcal{X}, \mathcal{Y})$ and makes it unsuitable for bounding numerical errors in the two-sided decomposition.
With this choice of $\kappa_r(\mathcal{X}, \mathcal{Y})$, the two-sided error bound is
\begin{align}
 \left| fl \left( \mathbf{u}(x)^T \mathbf{C}(\tilde{\mathbf{x}},\tilde{\mathbf{y}}) \mathbf{v}(y) \right) - \mathbf{f}(x)^T \mathbf{g}(y) \right| &\le \epsilon_r \frac{\kappa_r(\mathcal{X},\mathcal{Y}) + \kappa_r(\mathcal{Y},\mathcal{X})}{|x - y|} \notag \\
 & \ \ \ \, + \epsilon_r^2 \frac{\kappa_r(\mathcal{X},\mathcal{Y}) \kappa_r(\mathcal{Y},\mathcal{X})}{|x - y|}, \ \ \ \ (x,y) \in \mathcal{X} \times \mathcal{Y}, \notag
\end{align}
 assuming that the floating-point computations are decomposed into an intermediate matrix-vector product followed by a vector inner product.
The numerical stability of \cref{interpolant_matrices} thus requires that $\kappa_r(\mathcal{X}, \mathcal{Y})$ and $\kappa_r(\mathcal{Y}, \mathcal{X})$ grow slowly with increasing $r$.

As a numerical example, we consider $\kappa_r(\lambda) \coloneqq \kappa_r([\lambda,1],[-1,-\lambda])$ corresponding to the analytical solutions in \cref{analytical_solution}.
As with $Z_r(\mathcal{X}, \mathcal{Y})$, $\kappa_r(\mathcal{X}, \mathcal{Y})$ is invariant to M\"{o}bius transformations of $\mathcal{X}$ and $\mathcal{Y}$,
 thus $\kappa_r([x_{\min},x_{\max}], [y_{\min}, y_{\max}]) = \kappa_r(\lambda)$ for $\lambda$ in \cref{mobius_transform}.
In the $\lambda \rightarrow 1$ limit of \cref{zolotarev_solution} shown in \cref{map_limits}, $\tilde{\mathbf{x}}$ and $\tilde{\mathbf{y}}$ approach Chebyshev nodes
 that are shifted and scaled from $[-1,1]$ to $[\lambda,1]$ and $[-1,-\lambda]$.
The weights and prefactors from \cref{skeleton_condition} vanish in this limit,
 reducing $\kappa_r(\lambda)$ to the Lebesgue constant for Chebyshev nodes,
 which has known bounds and asymptotes \cite{lebesgue_chebyshev}.
In the left panel of \cref{fig_condition},
 we plot the difference between $\kappa_r(\lambda)$ and its large-$r$ asymptote at $\lambda = 1$,
\begin{equation} \label{lebesgue_asymptote}
 \overline{\kappa}_r \coloneqq \frac{2}{\pi} \left( \gamma + \log \frac{8}{\pi} + \log r \right) ,
\end{equation}
 where $\gamma \approx 0.5772$ is the Euler--Mascheroni constant, and we observe this asymptotic $r$ dependence to persist for all $\lambda \in (0,1)$.
The $\lambda$-dependent offset of the asymptote,
\begin{equation} \label{asymptotic_condition}
  \hat{\kappa}(\lambda) \coloneqq \lim_{r \rightarrow \infty} \left( \kappa_r(\lambda) - \overline{\kappa}_r \right),
\end{equation}
 is plotted in the right panel of \cref{fig_condition}.
We can fit all available data for $\hat{\kappa}(\lambda)$
 to an absolute accuracy of $0.01$ with a rational approximant in the variable $\log \lambda$,
\begin{equation}
  \hat{\kappa}(\lambda) \approx \frac{ 0.305 (\log \lambda)^2}{5.88 - \log \lambda}. \notag
\end{equation}
This numerical example provides an empirical understanding of $\kappa_r(\lambda)$,
 but a rigorous understanding comparable to \cref{analytical_solution} will require substantially more work.

\begin{figure}[!t]
  \centering
  \label{fig_condition}\includegraphics{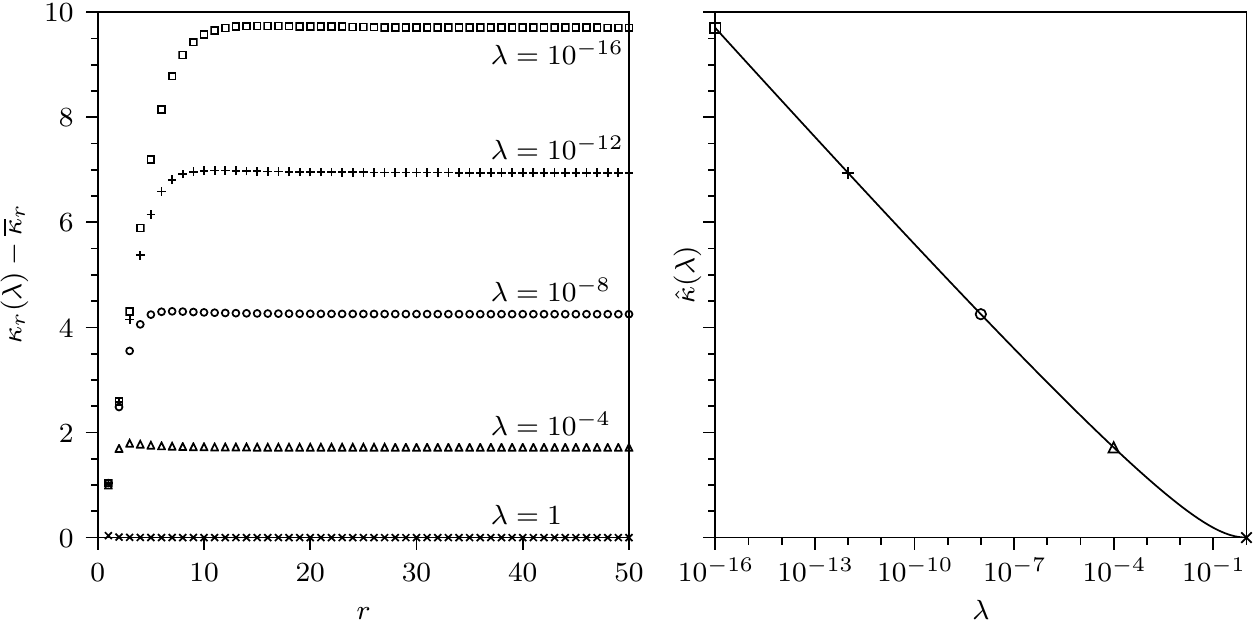}
  \caption{The relative condition number of a skeleton decomposition, $\kappa_r(\lambda) \coloneqq \kappa_r([\lambda,1],[-1,-\lambda])$ from \cref{skeleton_condition},
  offset by its large-$r$ asymptote at $\lambda = 1$, $\overline{\kappa}_r$ from \cref{lebesgue_asymptote}, (left panel)
  and the $\lambda$ dependence of the observed large-$r$ asymptote of this difference, $\hat{\kappa}(\lambda)$ from \cref{asymptotic_condition} (right panel). }
\end{figure}

We note that $\mathbf{u}(x)$ and $\mathbf{v}(y)$ in \cref{interpolant_matrices} each form a basis for rational interpolation
 and $\kappa_r(\mathcal{X},\mathcal{Y})$ in \cref{skeleton_condition} is related to their Lebesgue constants,
 which is an active topic of research \cite{rational_lebesgue, rational_lebesgue2}.
Unfortunately, the available theoretical results on this topic are not immediately applicable here.
Also, an important property of $Z_r(\mathcal{X}, \mathcal{Y})$,
\begin{equation}
 Z_r(\mathcal{X}' , \mathcal{Y}') \le Z_r(\mathcal{X}, \mathcal{Y}), \ \ \ \ \mathcal{X}' \subseteq \mathcal{X}, \ \ \ \ \mathcal{Y}' \subseteq \mathcal{Y}, \notag
\end{equation}
 does not apply to $\kappa_r(\mathcal{X}, \mathcal{Y})$, which makes it difficult to bound the value of $\kappa_r(\mathcal{X}, \mathcal{Y})$ without computing it.
However, it is straightforward to compute $\kappa_r(\mathcal{X}, \mathcal{Y})$ alongside $Z_r(\mathcal{X}, \mathcal{Y})$
 with no substantial increase in computational cost,
 and this is available in our software implementation of the numerical solver in \cref{numerical_solution}.

\subsection{Equivalence of minimized norms\label{svd_analysis}}

Because the equivalence of norms is only guaranteed for finite-dimensional spaces,
 this discussion is limited to the matrix SVD rather than the more general operator SVD.
Likewise, we consider the Cauchy matrix $\mathbf{C}(\mathbf{x}, \mathbf{y})$ for $\mathbf{x} \in \mathbb{R}^m$ and $\mathbf{y} \in \mathbb{R}^n$ such that $\min_i x_i > \max_i y_i$.
The inequalities for the equivalence
 between the 2-norm and elementwise relative maximum norm are
\begin{align} \label{norm_equivalence}
 \eta_- \| \mathbf{A} \|_{\mathbf{x}, \mathbf{y}} & \le \|  \mathbf{A} \|_2 \le \eta_+ \| \mathbf{A} \|_{\mathbf{x}, \mathbf{y}} , \ \ \ \
 \eta_+^{-1} \|  \mathbf{A} \|_2 \le \| \mathbf{A} \|_{\mathbf{x}, \mathbf{y}} \le \eta_-^{-1} \| \mathbf{A} \|_2, \\
   & \| \mathbf{A} \|_{\mathbf{x}, \mathbf{y}} \coloneqq \max_{i,j} | (x_i - y_j) [\mathbf{A}]_{i,j} | , \ \ \ \ \mathbf{A} \in \mathbb{R}^{m \times n}, \notag
\end{align}
 with coefficients that quantify the saturation of an inequality,
\begin{align}
 \eta_- &\coloneqq \min_{\mathbf{B} \in \mathbb{R}^{m \times n} } \frac{ \| \mathbf{B} \|_2}{\| \mathbf{B} \|_{\mathbf{x}, \mathbf{y}} } = \frac{1}{\max_{i,j} | x_i - y_j| }, \\
 \eta_+ &\coloneqq \max_{\mathbf{B} \in \mathbb{R}^{m \times n} } \frac{ \| \mathbf{B} \|_2}{\| \mathbf{B} \|_{\mathbf{x}, \mathbf{y}} } = \| \mathbf{C}(\mathbf{x},\mathbf{y}) \|_2 . \notag
\end{align}
The maximum is attained by $\mathbf{B} = \mathbf{C}(\mathbf{x},\mathbf{y})$,
 and the minimum is attained by $\mathbf{B}$ with one nonzero matrix element at the same location as a matrix element of $\mathbf{C}(\mathbf{x},\mathbf{y})$ with the smallest magnitude.
We prove these optimizers by constructing attained bounds,
\begin{align} \label{equivalence_coefficients}
 \eta_- &\ge \frac{1}{\max_{i,j} | x_i - y_j| } \min_{\mathbf{B} \in \mathbb{R}^{m \times n} } \frac{\|\mathbf{B}\|_2}{\|\mathbf{B}\|_{\max}} \ge \frac{1}{\max_{i,j} | x_i - y_j| } , \\
 \eta_+ &= \max_{\mathbf{E} \in \mathbb{R}^{m \times n}} \frac{\| \mathbf{C}(\mathbf{x},\mathbf{y}) \circ \mathbf{E}\|_2 }{\| \mathbf{E} \|_{\max}} \le
 \max_{\substack{\mathbf{p} \in \mathbb{R}^m \\ \mathbf{q} \in \mathbb{R}^n}} \sum_{i,j} \frac{ | p_i q_j [\mathbf{C}(\mathbf{x},\mathbf{y})]_{i,j} |}{ \| \mathbf{p} \|_2 \| \mathbf{q} \|_2 } \le \| \mathbf{C}(\mathbf{x}, \mathbf{y}) \|_2 , \notag
\end{align}
 where the lower bound on $\eta_-$ results from independently maximizing the two terms in $\| \mathbf{B} \|_{\mathbf{x}, \mathbf{y}}$
 and identifying that the elementwise maximum norm, $\|\mathbf{B}\|_{\max}$, is a lower bound for the 2-norm,
 and the upper bound results from changing matrix variables, $\mathbf{B} = \mathbf{C}(\mathbf{x},\mathbf{y}) \circ \mathbf{E}$ where $\circ$ is the elementwise matrix product,
 to split the variational form of the 2-norm with the H\"{o}lder inequality, $| \mathbf{a}^T \mathbf{b} | \le \| \mathbf{a} \|_1 \| \mathbf{b} \|_{\infty}$,
 and reform it by relaxing the elementwise sign constraints on $\mathbf{p}$ and $\mathbf{q}$ in the maximand.

Using \cref{norm_equivalence}, we now consider the transferability of skeleton decompositions and truncated SVDs.
We refer to the rank-$r$ skeleton decomposition that minimizes \cref{minimax}
 for $\mathcal{X} = \{ x_1, \cdots , x_m \}$ and $\mathcal{Y} = \{ y_1 , \cdots , y_n \}$ as $\widetilde{\mathbf{C}}_{r}^{\mathrm{skel}}(\mathbf{x},\mathbf{y})$
 and to the truncated SVD retaining the $r$ largest singular values and vectors to minimize \cref{svd_optimal} for $\mathbf{K} = \mathbf{C}(\mathbf{x},\mathbf{y})$
 as $\widetilde{\mathbf{C}}_{r}^{\mathrm{SVD}}(\mathbf{x},\mathbf{y})$.
We then define their transferability between norms as
\begin{align} \label{transferability}
 \mu_r^{\mathrm{skel}} &\coloneqq \frac{\| \mathbf{C}(\mathbf{x},\mathbf{y}) - \widetilde{\mathbf{C}}_{r}^{\mathrm{skel}}(\mathbf{x},\mathbf{y}) \|_2}
 {\| \mathbf{C}(\mathbf{x},\mathbf{y}) - \widetilde{\mathbf{C}}_{r}^{\mathrm{SVD}}(\mathbf{x},\mathbf{y}) \|_2}, \\
 \mu_r^{\mathrm{SVD}} &\coloneqq \frac{\| \mathbf{C}(\mathbf{x},\mathbf{y}) - \widetilde{\mathbf{C}}_{r}^{\mathrm{SVD}}(\mathbf{x},\mathbf{y}) \|_{\mathbf{x}, \mathbf{y}}}
 {\| \mathbf{C}(\mathbf{x},\mathbf{y}) - \widetilde{\mathbf{C}}_{r}^{\mathrm{skel}}(\mathbf{x},\mathbf{y}) \|_{\mathbf{x}, \mathbf{y}}}, \notag
\end{align}
 which are ratios between suboptimal and optimal approximation errors.
We use \cref{norm_equivalence} to construct upper bounds on $\mu_r^{\mathrm{skel}}$ and $\mu_r^{\mathrm{SVD}}$ alongside their trivial lower bounds,
\begin{equation} \label{transfer_bounds}
 1 \le \mu_r^{\mathrm{skel}} \le \eta_+/\eta_-, \ \ \ \ 1 \le \mu_r^{\mathrm{SVD}} \le \eta_+/\eta_-, \ \ \ \ \mu_r^{\mathrm{skel}} \mu_r^{\mathrm{SVD}} \le \eta_+/\eta_-.
\end{equation}
If $\eta_+ / \eta_- \approx 1$, then $\mu_r^{\mathrm{skel}} \approx \mu_r^{\mathrm{SVD}} \approx 1$ is guaranteed.
Otherwise, we cannot discount the possibility of a large $\mu_r^{\mathrm{skel}}$ or $\mu_r^{\mathrm{SVD}}$
 saturating a large upper bound in \cref{transfer_bounds}.

The near-saturation of the upper bound on $\mu_r^{\mathrm{skel}} \mu_r^{\mathrm{SVD}}$ in \cref{transfer_bounds} can be observed from numerical examples.
Here we consider $m = n$ with $\mathbf{x}$ and $\mathbf{y}$ assigned to be the local extrema of an analytical solution in \cref{zolotarev_solution} parameterized by $\lambda$.
We observe that $\| \mathbf{C}(\mathbf{x},\mathbf{y}) \|_2$ is within an order of magnitude of saturating the upper bound set by its elementwise maximum norm,
 which sets an upper bound on its transferability of
\begin{equation}
 \eta_+/\eta_- \le n/ \lambda 
\end{equation}
 that is similarly close to being saturated.
The denominator of $\mu_r^{\mathrm{SVD}}$ in \cref{transferability} can be bounded from above by $Z_r(\lambda)$,
 which is exact when $n-1$ is divisible by $r$ and tight for large $n$.
From \cref{Z_bounds}, $Z_r(\lambda)$ has an asymptotic exponential decay in $r$ with upper and lower bounds that converge.
An upper bound on transferability implies that this exponential decay must be inherited by other optimal low-rank approximations and the norms that they optimize,
 which has been proven for the truncated SVD and the 2-norm \cite[Cor.\@ 4.2]{displacement.rank.review}.
In \cref{fig_equivalence}, the transferability of both the truncated SVD and skeleton decomposition inevitably saturates at large $r$
 as their relative error increases and approaches the transferability bound.
In this saturated regime, the exponential decay of error is transferred to the suboptimal norm.
We observe that the truncated SVD and skeleton decomposition are equally transferable at large $r$,
 although we are unable to explain why transferability is so balanced between these approximants.

\begin{figure}[!t]
  \centering
  \label{fig_equivalence}\includegraphics{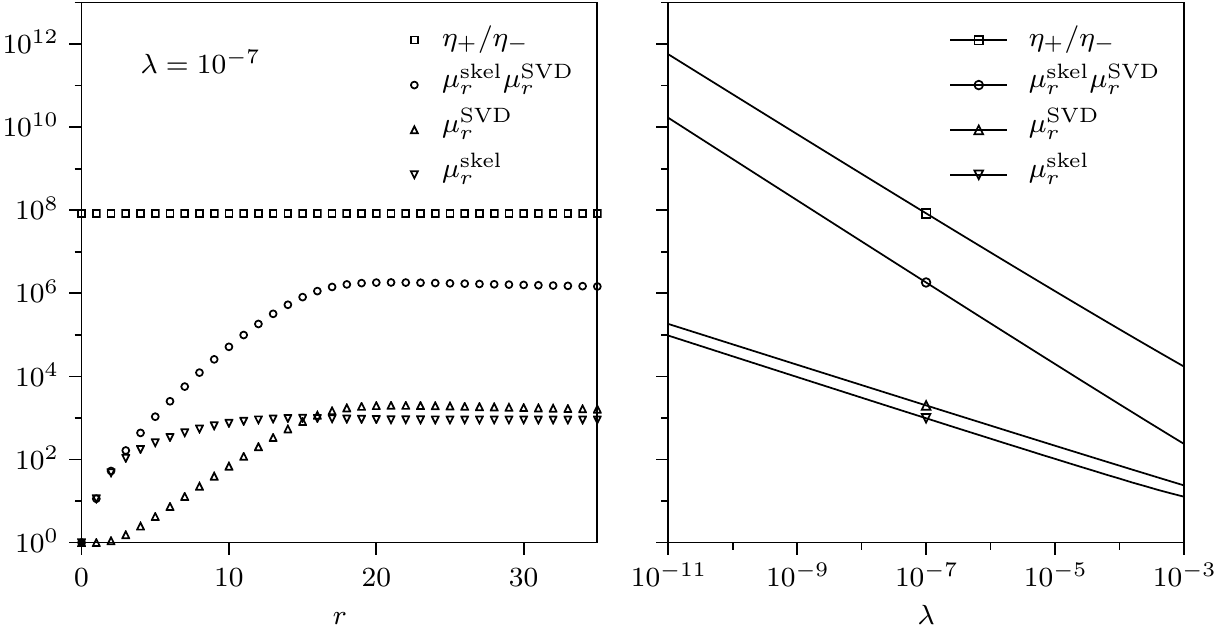}
  \caption{The transferability of the skeleton decomposition and truncated SVD,  $\mu_r^{\mathrm{skel}}$ and $\mu_r^{\mathrm{SVD}}$ from \cref{transferability},
  and the upper bound on their product, $\eta_+/\eta_-$ from \cref{equivalence_coefficients},
  for $\mathbf{C}(\mathbf{x},\mathbf{y})$ with $\mathbf{x}$ and $\mathbf{y}$ from \cref{zolotarev_solution} at $n = 99$,
 including their $r$ dependence at $\lambda = 10^{-7}$ (left panel) and the $\lambda$ dependence of their maximum value over $r$ (right panel).
The observed power laws in $\lambda$ of their maximum values are $\mu_r^{\mathrm{skel}} \propto \lambda^{-0.49}$, $\mu_r^{\mathrm{SVD}} \propto \lambda^{-0.49}$, and $\eta_+/\eta_- \propto \lambda^{-0.94}$. }
\end{figure}

To improve the transferability of optimal low-rank approximations between two norms,
 we can consider modifying a norm with diagonal matrices, $\mathbf{P} \coloneqq \mathrm{diag}(\mathbf{p})$ and $\mathbf{Q} \coloneqq \mathrm{diag}(\mathbf{q})$,
 which induces a weighted norm equivalence relation,
\begin{align}
 \eta_-(\mathbf{P},\mathbf{Q}) \| \mathbf{A} \|_{\mathbf{x}, \mathbf{y}} & \le \|  \mathbf{P} \mathbf{A} \mathbf{Q} \|_2 \le \eta_+(\mathbf{P},\mathbf{Q}) \| \mathbf{A} \|_{\mathbf{x}, \mathbf{y}} , \notag \\
 \eta_+(\mathbf{P}^{-1},\mathbf{Q}^{-1})^{-1} \|  \mathbf{A} \|_2 & \le \| \mathbf{P} \mathbf{A} \mathbf{Q} \|_{\mathbf{x}, \mathbf{y}} \le \eta_-(\mathbf{P}^{-1},\mathbf{Q}^{-1})^{-1} \| \mathbf{A} \|_2 . \notag
\end{align}
A truncated SVD of $\mathbf{P} \mathbf{A} \mathbf{Q}$ reweighted by $\mathbf{P}^{-1}$ on the left and $\mathbf{Q}^{-1}$ on the right is the
 optimal low-rank approximation relative to this weighted 2-norm.
Similarly, we can extend \cref{minimax_theorem} and \cref{Z_lemma} to include separable weight functions
 and produce a weighted skeleton decomposition that is the optimal low-rank approximation of the weighted elementwise relative maximum norm.
Thus, we can improve transferability while mostly preserving the familiar forms of low-rank approximation by choosing $\mathbf{P}$ and $\mathbf{Q}$
 to reduce $\eta_+(\mathbf{P},\mathbf{Q})/\eta_-(\mathbf{P},\mathbf{Q})$.
The coefficients of this norm equivalence are
\begin{align}
 \eta_-(\mathbf{P},\mathbf{Q}) &\coloneqq \min_{\mathbf{B} \in \mathbb{R}^{m \times n} } \frac{ \| \mathbf{P} \mathbf{B} \mathbf{Q} \|_2}{\| \mathbf{B} \|_{\mathbf{x}, \mathbf{y}} } = \min_{i,j} \left| \frac{p_i q_j}{x_i - y_j} \right|, \notag \\
 \eta_+(\mathbf{P},\mathbf{Q}) &\coloneqq \max_{\mathbf{B} \in \mathbb{R}^{m \times n} } \frac{ \| \mathbf{P} \mathbf{B} \mathbf{Q} \|_2}{\| \mathbf{B} \|_{\mathbf{x}, \mathbf{y}} } = \| \mathbf{P} \mathbf{C}(\mathbf{x},\mathbf{y}) \mathbf{Q} \|_2 , \notag
\end{align}
 with the same proof as \cref{equivalence_coefficients}.
For the example in \cref{fig_equivalence}, we can use the weights
\begin{equation}
 p_i = | x_i | + \sqrt{ \lambda }, \ \ \ \ q_i = | y_i | + \sqrt{\lambda}, \notag
\end{equation}
 to reduce an upper bound on $\eta_+(\mathbf{P},\mathbf{Q})/\eta_-(\mathbf{P},\mathbf{Q})$ set by the equivalence
 between the 2-norm and the elementwise maximum norm by a factor of $\approx \sqrt{\lambda}$,
\begin{equation}
 \frac{\eta_+(\mathbf{P},\mathbf{Q})}{\eta_-(\mathbf{P},\mathbf{Q})} \le n \frac{\max_{i,j}|p_i q_j / (x_i - y_j)|}{\min_{i,j}|p_i q_j / (x_i - y_j)|} = \frac{n(1 + \lambda)}{2 \sqrt{\lambda}},
\end{equation}
While these weights reduce the upper bound substantially from $\propto \lambda^{-1}$ to $\propto \lambda^{-1/2}$,
 transferability between weighted approximants remains poor for small $\lambda$.

\section{Conclusions\label{conclusions}}

Skeleton decompositions were originally proposed as heuristic alternatives to truncated SVDs
 for low-rank matrix approximations with a small but suboptimal 2-norm error \cite{skeleton}.
The main result of this paper, \cref{minimax_theorem}, has proven that skeleton decompositions
 have their own optimality result, specific to the Cauchy kernel and the maximum relative pointwise error.
It relates Zolotarev's work \cite{zolotarev.review,minimax.rational} on optimal rational approximation of functions to optimal low-rank approximation of matrices and operators.
The special property of the Cauchy kernel that enables this optimality result is the equivalence between
 its skeleton decompositions and rational interpolants shown in \cref{interpolant_matrices}.
Previous work \cite{displacement.rank.review,Hilbert_upper_bound} had proven $Z_r(\mathcal{X}, \mathcal{Y})$ as an upper bound in \cref{minimax},
 but the lower bound and its proof is a new result of this work.

There are several ways in which the results of this paper might be extended and expanded.
Although \cref{minimax_theorem} does not extend to complex-valued $\mathcal{X}$, $\mathcal{Y}$, $\tilde{\mathbf{x}}$, and $\tilde{\mathbf{y}}$,
 $Z_r(\mathcal{X}, \mathcal{Y})$ remains an upper bound on \cref{minimax}.
The minimizing roots and poles of $h(z)$ are not always simple in the complex case,
 as occurs in a known complex analytical solution \cite{complex_disk}.
\cref{minimax_theorem} and \cref{Z_lemma} can be extended to include positive separable weight functions $w(x) w(y)$ in their optimands.
Some steps in the proof of \cref{minimax_theorem} can be adapted to other kernel functions,
 but it is not clear if they can be leveraged into a useful result.
Numerical and heuristic solutions of \cref{minimax} were demonstrated in \cref{many_solutions},
 but more effective algorithms to construct numerical solutions and a more diverse set of heuristic solutions would be useful.
Proofs for the asymptotic values of Lebesgue constants for Chebyshev nodes \cite{lebesgue_chebyshev} 
 might be adapted to \cref{asymptotic_condition} by extending their use of
 trigonometric identities to the corresponding Jacobi elliptic functions.

\cref{minimax_theorem} has several immediate applications to numerical linear algebra.
First, the hierarchical factorization of real Cauchy matrices with high relative elementwise accuracy is possible
 by recursively partitioning a Cauchy matrix $\mathbf{C}(\mathbf{x},\mathbf{y})$ as
\begin{align}
 \mathbf{C}(\mathbf{x},\mathbf{y}) = \left[ \begin{array}{cc} \mathbf{C}(\mathbf{x}_1,\mathbf{y}_1) & \mathbf{C}(\mathbf{x}_1,\mathbf{y}_2) \\ \mathbf{C}(\mathbf{x}_2,\mathbf{y}_1) & \mathbf{C}(\mathbf{x}_2,\mathbf{y}_2) \end{array} \right], \notag
\end{align}
 where the elements of $\mathbf{x}$ and $\mathbf{y}$ ordered and partitioned such that
 \cref{minimax_theorem} can be applied to the off-diagonal matrix blocks as the process is recursed with the diagonal matrix blocks.
Such hierarchical factorizations might extend to other matrices with low displacement rank
 through their rank-preserving connection to Cauchy matrices \cite{displacement.rank.review}.
Second, techniques for the dimensional reduction of sparse symmetric eigenvalue problems
 use the Cauchy kernel as a component of spectral filtering \cite{rational_AMLS}.
The tight upper bound on pointwise relative error in \cref{minimax_theorem} can improve upon the efficacy of spectral filtering
 for use as a reliable primitive in future eigenvalue solvers.

\cref{minimax_theorem} is also useful in fast algorithms for the many-electron problem.
The energy denominators that occur in many-body perturbation theory can be separated
 using Cauchy kernels \cite{Moussa_RPA}, although they are usually separated with exponential sums \cite{Laplace_chemistry,Laplace_minimax}.
The maximum errors in these exponential sums have upper bounds that are proportional to Zolotarev numbers \cite{Laplace_bounds}
 but suboptimal relative to \cref{minimax_theorem}.
Using the optimality results of this paper as a guide, these bounds may be tightened, and asymptotically optimal limits may be identified.
Some fast algorithms for mean-field theory calculations \cite{PEXSI} use rational function approximations
 to relate general matrix functions to shifted matrix inverses.
While specific function approximations can be optimized \cite{rational_minimax},
 many of these functions share a common approximation domain and pole domain.
Skeleton decompositions of $1/(x-y)$ for $x \in \mathcal{X}$ and $y \in \mathcal{Y}$ can be used as a common function approximant
 if $\mathcal{X}$ is the approximation domain and $\mathcal{Y}$ is the pole domain.
Mastery of these approximation schemes can benefit fast algorithms by
 reducing computational cost prefactors and tightening computable error bounds.

\bibliographystyle{siamplain}
\bibliography{cauchy}
\end{document}